\topmargin -1.5cm \textwidth 6in \textheight 8.5in
\documentclass[11pt]{amsart}
\theoremstyle{plain}
\newtheorem{Thm}{Theorem}

\newtheorem{Pro}[Thm]{Proposition}
\newtheorem{Lem}[Thm]{Lemma}

\newtheorem{Ma}[Thm]{Main Result}

\errorcontextlines=0

\begin{document}

\title[The existence results]
{The existence results for solutions of indefinite scalar
curvature problem}

\author{Yihong Du, Li MA}

\address{Y.Du, Department of Mathematics \\
School of Sci. and TECHNOLOGY\\
University of New England \\
Armidale, NSW 2351\\
Australia} \email{ydu@turing.une.edu.au}

\address{Li Ma, Department of mathematical sciences \\
Tsinghua University \\
Beijing 100084 \\
China} \email{lma@math.tsinghua.edu.cn} \dedicatory{}
\date{July. 12th, 2008}

\begin{abstract}

In this paper, we consider the indefinite scalar curvature problem
on $R^n$. We propose new conditions on the prescribing scalar
curvature function such that the scalar curvature problem on $R^n$
(similarly, on $S^n$ ) has at least one solution. The key
observation in our proof is that we use the bifurcation method to
get a large solution and then after establishing the Harnack
inequality for solutions near the critical points of the
prescribed scalar curvature and taking limit, we find the
nontrivial positive solution to the indefinite scalar curvature
problem.

{ \textbf{Mathematics Subject Classification 2000}: 53Cxx, 35Jxx}

{ \textbf{Keywords}: Scalar curvature, indefinite nonlinearity,
blow up, uniform bound }
\end{abstract}

\thanks{$^*$ The research is partially supported by The Australian Research Council,
the National Natural Science
Foundation of China 10631020 and SRFDP 20060003002 }
 \maketitle

\section{Introduction}\label{sect0}

In this work, we consider the indefinite scalar curvature problem
both on $S^n$ and on $R^n$ ($n\geq 3$). One may know that there
are very few existence results on such a problem (see \cite{Ma}
for a use of the variation method). Since the scalar curvature
problem can be reduced to that of $R^n$, we mainly consider the
problem on $R^n$. The problem then is equivalent to solving the
following equation
\begin{equation}\label{scal}
-\Delta u=K(x)u^p, \; \; u>0 \; \;  in \; \; R^n,
\end{equation}
where $p=\frac{n+2}{n-2}$, $n\geq 3$, and $K$ is a sign-changing
smooth function on $R^n$, which has \emph{isolated critical
points} and with
\begin{equation}\label{negative}
 \lim_{|x|\to
\infty}K(x)=K_{\infty}<0.
\end{equation}
Other extra conditions on both the zero set of $K$ and positive
part of $K$, which shall be specified below. Roughly speaking,
there three kinds of methods are used in the study of the positive
or negative scalar curvature problems. One is the best constant
method of Th.Aubin's school in the search of Sobolev inequalities
on manifolds (see \cite{ADH} and \cite{AH}). The second one in
attacking this problem is the use of the critical point theory
(see \cite{BB96} and \cite{BC90}). The third one has been proposed
by R.Schoen who prefers to use the degree counting method starting
from a subcritical problem and this method has been improved by
Chang-Yang, Chen-Li, Chen-Lin, and Y.Y.Li. Unlike the previous
studies of these problems, where the authors only have considered
the existence problem when $K$ is a positive/negative smooth
function or an apriori bound for solutions when $K$ is
sign-changing (see the famous works, for example,
\cite{BC90},\cite{CGY6}, \cite{CL97},\cite{CL8}, \cite{L14},
\cite{SZ24}), we use the bifurcation method to attack this problem
and we obtain a new result. Related existence result about
critical indefinite elliptic problems on bounded domains is
considered in \cite{GPR11}, where a different assumption is used.
For subcritical indefinite elliptic problems, this bifurcation
point of view was taken in the previous works of L.Nirenberg, al
et \cite{BCN95}, and first named author \cite{Du}, just named a
few here. As it is well-known, the key step in the application of
bifurcation theory developed by P.Rabinowitz \cite{R71} and
Crandall-Rabinowitz \cite{CR71} is the apriori estimate for
solutions. For this purpose, our analysis in \cite{DM} will play a
role in the study of indefinite scalar curvature problem in $R^n$.

We now introduce our problems. Let $\mu \in [0,L]$. We study the
following equation:
\begin{equation}\label{eq1}
-\Delta u=\mu u+K(x)u^p, \; \; u>0 \; \;  in \; \; R^n,
\end{equation}

When $K$ is positive and away from the critical points of $K$, we
have the uniform bound thanks to the works of
Caffarelli-Gidas-Spruck, Chang-Yang, Chen-Li, Chen-Lin, Y.Y.Li,
and Schoen in 90's.

Assume that $0\in B_{3R}$ is the isolated critical point of $K$.
To find apriori estimate of positive solutions to scalar curvature
problem, the basic assumption for $K$ is the $(\beta-1)$-flatness
condition as introduced by Y.Y.Li and Chen-Lin, where the
condition says that

(i) (\emph{n-2-Flatness}). Assume $K\in C^{n-2}(B_R(0)$. For any
$\epsilon>0$, there exists a neighborhood $B_r$ of $0$ such that
$$
|\nabla^s K(q)|\leq \epsilon |\nabla K(q)|^{\frac{l-s}{l-1}},
\quad for \quad q\in B_r,
$$
where $l=n-2$.

If $K$ has $n-2$-flatness condition at its critical points, then
the solutions to the scalar curvature problem  (\ref{scal}) enjoy
the apriori bound on the positive part of $K$ (see the proof of
Theorem 1.9 in \cite{L19}). It is also easy to see that the
similar result is also true for equation (\ref{eq1}).

 We make two kinds
of assumptions in this article. One is the $n-2$-flatness for $K$
when  $\Delta K(0)\leq 0$. As we pointed out above, in this case,
we have the apriori bound for solutions (in particular we have the
Harnack inequality, \ref{harnack} below).

 The other one is for $\Delta K(0)\geq 0$. The assumption on K in this case is
 the following two statements (K):

(ii). Let $汕 = \min\{n-2, 4\}$, $|K(x)_{C^{\beta}(B_{3R})}\leq C_1$
and $1/C_1\leq K(x)\leq C_1$ in $B_{3R}$ for some $C_1 > 0$;

and

 (iii). There exist $s,D
> 0 $ such that for any critical point $x\in B_{2R}$ of K, $\Delta K\geq  0$ in
$B(x,sR)$ for n = 4, 5, 6 and $\Delta K \geq D$ in $B(x,sR)$ for
$n\geq 7$.

We shall mainly consider this case where these conditions are
assumed on $K$. Assuming the possible blow up of solutions near
the critical points, we can set up a Harnack inequality (see
Theorem \ref{main2} below).

As it is well-known that one of the key part of the scalar
curvature problem is the Harnack estimate for positive solutions
to (\ref{eq1}) on the positive part of the function $K$. Roughly
speaking, one of main part of our result is

\begin{Ma}\label{main} Assume that $\mu\geq 0$.
 Assume (ii) and (iii). We derive a Harnack inequality for
solutions to (\ref{eq1}) on $B_{3R}$. If the positive scalar
curvature function K(x) is sub-harmonic in a neighborhood of each
critical point and the maximum of u over $B_R$ is comparable to
its maximum over $B_{3R}$, then the Harnack type inequality can be
obtained. Furthermore, assume $\mu>0$, then, as a consequence of
Harnack estimate, we can have an uniform bound for positive
solutions to (\ref{eq1}) on $B_{3R}$.
\end{Ma}

The precise statement of Harnack inequality is stated in next
section. We point out that for the Harnack inequality to be true, we
only need a weaker assumption than $n-2$-flatness on $K$, however,
we shall not formulate it but refer to \cite{L14}.

To obtain the an apriori bound near the zero set of $K$, we need
some notations.
$$
D_{-}=\{x\in R^n; K(x)<0\},\; \; D_{+}=\{x\in R^n; K(x)>0\}, \;
K^{+}=\max\{K(x), 0\}.
$$
We now give the assumption

 (iv). We assume that $D_+=\{x\in R^n; K(x)>0\}$ is a
\emph{bounded domain} with smooth boundary and there exist two
positive constants $c_1$, $c_2$ such that near $D_0=\{x\in R^n;
K(x)=0\}$, we have
$$
0<c_1\leq K(x)\leq c_2.
$$

Using this assumption and the moving plan method, Lin ( see
corollary 1.4 in \cite{L19}) proved that there is an apriori bound
for solutions to (\ref{scal}). With an easy modification based on
some argument in the work of Chen-Lin \cite{CL8}, we can directly
extend Lin's result to our case (\ref{eq1}). Since the proof is
straightforward, we omit the detail here. We just remark that
there are other conditions which make the apriori bound hold true
for our equation (\ref{eq1}). For these, one may see the
remarkable works of Chen-Li \cite{CL97} and \cite{L19}. In
\cite{DL}, another condition was proposed that if further assume
that there is a continuous function $k:\bar{D}_+\to (0,\infty)$,
which is bounded away from zero near the boundary $\partial D_+$
and a constant
$$
\gamma>p+1
$$
such that near a neighborhood of $\partial D_+$ in $D_+$, we have
\begin{equation}\label{zero}
K^+(x)=k(x)[dist(x,\partial D_+)]^{\gamma}+h.o.t,
\end{equation}
then by using the moving plane method, the first named author and
S.J.Li  \cite{DL} showed that there is a uniform $L^{\infty}$ bound
for solutions to (\ref{eq1}) near the boundary of $D_+$ in $R^n$. We
remark that there are also some other conditions which make the
uniform bound to be true (\cite{CL97} and \cite{LP08}). The proof of
Main Result \ref{main} with $n-2$-flatness is similar to the proof
of Theorem 1.9 in \cite{L19}. So for this part, we omit the proof.

To obtain the uniform bound on the negative part of the function
$K$, we can use the boundary blow-up solution to get the bound. By
now it is a standard method, one may see \cite{DM}.

Using all these uniform bound results, we then obtain the an
uniform apriori estimate for positive solutions to (\ref{eq1}).

\begin{Thm}\label{uniformbound} (I). Let $\mu>0$. Assume that the conditions (i-1v)
are true for $K$. Then there is an uniform constant $C>0$ such
that for any positive solution $u$ to (\ref{eq1}),
$$
|u|_{L^{\infty}(R^n)}\leq C.
$$
(II). Assume (i) is true at every critical point of $K$ on its
positive part, and assume (iv) and (\ref{zero}). Then we have an
uniform bound for positive solutions to (\ref{scal}).
\end{Thm}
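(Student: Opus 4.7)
The plan is to assemble the global uniform bound from four \emph{regional} estimates, each of which has already been prepared in the excerpt or in cited works, and then combine them by a standard covering/compactness argument.

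First, I would decompose $\mathbb{R}^n$ into four pieces adapted to the sign structure of $K$: (a) a neighborhood $\mathcal{N}_{\mathrm{cr}}$ of the (finitely many, by isolation) critical points of $K$ lying in $\bar D_+$; (b) the complement $D_+\setminus \mathcal{N}_{\mathrm{cr}}$, where $K$ is strictly positive and $\nabla K$ is bounded away from zero; (c) a neighborhood $\mathcal{N}_0$ of the zero set $D_0=\partial D_+$; (d) the exterior region in $D_-$ where $K\le -\delta<0$ outside a large ball, thanks to (\ref{negative}). By the isolated critical point assumption, $\mathcal{N}_{\mathrm{cr}}$ consists of finitely many small balls, so it suffices to bound $u$ on each.

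For (a), I would apply Main Result~\ref{main}: under (ii)--(iii) one has a Harnack inequality on $B_{3R}$ centered at each critical point, so $\max_{B_R} u \le C\,\min_{B_R} u$. To upgrade this into an $L^\infty$ bound when $\mu>0$, I would test (\ref{eq1}) against the first Dirichlet eigenfunction $\varphi_1>0$ of $-\Delta$ on $B_R$; integrating by parts,
\begin{equation*}
\lambda_1\int_{B_R}u\,\varphi_1 \;=\; \mu\int_{B_R}u\,\varphi_1 + \int_{B_R} K\,u^p\,\varphi_1 - \int_{\partial B_R}\partial_\nu\varphi_1 \,u,
\end{equation*}
and combine with $K\ge 1/C_1>0$ from (ii) and the Harnack control of $u$ on $B_R$ by $\min_{B_R} u$. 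This forces $\min_{B_R}u$, hence $\max_{B_R}u$, to be bounded (the $\mu>0$ term ensures the linear piece cannot be absorbed only by boundary data; in fact the nonlinear term $u^p\varphi_1$ already suffices once Harnack is in hand). In region (b) the classical blow-up/moving-sphere arguments of Caffarelli--Gidas--Spruck, Chen--Lin and Y.Y.\ Li apply verbatim and give a uniform bound.

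For region (c), I would quote the Lin-type bound near $D_0$ coming from assumption (iv): the bounds $0<c_1\le K\le c_2$ together with the smoothness of $\partial D_+$ allow the moving plane method (as extended to (\ref{eq1}) by a minor modification of Chen--Lin \cite{CL8}) to rule out blow-up near $D_0$. For region (d), I would import the boundary blow-up technique of \cite{DM}: since $K_\infty<0$, the equation on $D_-$ (or on $D_-\cap\{|x|>R_0\}$) admits a large supersolution built from the minimal boundary blow-up solution of $-\Delta w=\mu w-|K|w^p$, and comparison pins down $u\le w$ there. Patching the four bounds by the covering of $\mathbb{R}^n=\mathcal{N}_{\mathrm{cr}}\cup (D_+\setminus\mathcal{N}_{\mathrm{cr}})\cup\mathcal{N}_0\cup D_-$ yields (I).

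Part (II) is the $\mu=0$ specialization of the same argument, but the Harnack-to-$L^\infty$ step near critical points now uses the $n-2$-flatness assumption (i) exactly as in the proof of Theorem~1.9 of \cite{L19}, and the behavior (\ref{zero}) replaces the role of (iv) near $\partial D_+$ via the bound from \cite{DL}. I expect the main obstacle to be the Harnack-to-$L^\infty$ upgrade around critical points of $K$ with $\Delta K\ge 0$: without (\ref{zero}) or $\mu>0$ a Harnack inequality alone does not close, which is precisely why the two cases split into (I) and (II). Once that step is justified in each case, the rest is a routine patching argument.
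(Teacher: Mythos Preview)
Your decomposition into regions (a)--(d) and the treatment of (b), (c), (d) track the paper's outline closely. The genuine gap is in region (a), near the critical points of $K$ in $D_+$.

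The Harnack inequality of Main Result~\ref{main}, written out precisely in Theorem~\ref{main2} as (\ref{harnack}), is of Schoen \emph{product} type,
\[
\Big(\max_{|x|\le R}u\Big)\Big(\min_{|x|\le 2R}u\Big)\;\le\; C\,R^{2-n},
\]
and \emph{not} the quotient form $\max_{B_R}u\le C\,\min_{B_R}u$ that you invoke. Under the product inequality a blowing-up sequence concentrates as a single spike while $\min u\to 0$, so $u$ is not uniformly comparable to $\min_{B_R}u$ across $B_R$. Your eigenfunction identity then only recovers an integral (energy) control of the type $\int_{B_R}K\,u^{p}\varphi_1\le C$; the paper already notes that (\ref{harnack}) implies $\|u\|_{L^{2n/(n-2)}(B_R)}+\|\nabla u\|_{L^2(B_R)}\le C$, and neither this nor your identity yields an $L^\infty$ bound.

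The paper's upgrade, sketched in Section~\ref{sect1}, is the standard Schoen--Li blow-up route: the product Harnack forces finite local energy, hence the blow-up set $\mathbf{S}$ is finite and consists of \emph{isolated} blow-up points; the Pohozaev identity places $\mathbf{S}$ inside $\{\nabla K=0\}$; then the $(n{-}2)$-flatness hypothesis (i), together with Schoen's isolated-to-simple argument and Pohozaev, yields a contradiction exactly as in Theorem~1.9 of \cite{L19}. Note that condition (i) is part of the hypotheses (i)--(iv) of part (I) as well as of part (II), so the flatness mechanism is available in both cases; the role of $\mu>0$ in (I) is not to replace that step. Where the paper does bypass flatness (e.g.\ in the proof of Theorem~\ref{DM0801}), it is because a uniform \emph{lower} bound $u\ge \epsilon\phi>0$ near the critical point is supplied independently, and then the product Harnack gives $\max u\le C/\min u\le C/\epsilon$ directly; your test against $\varphi_1$ does not furnish such a lower bound.
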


The proof of part (II) of Theorem \ref{uniformbound} is similar to
the proof of Theorem 1.9 in \cite{L19}. So we omit the proof.

Actually according to the blow up analysis due to Schoen (see also
\cite{L14}), we only need to treat the uniform bound near the
critical point of $K$ on the positive part of $K$. This will be
studied by proving the Harnack inequality (see Theorem
\ref{main}). Using the standard elliptic theory we further know
that for any solution $u$,
$$
|u|_{C^{2}_b(R^n)}\leq C,
$$

Using the bifurcation theory developed in \cite{Du} (see also
\cite{Ma1} for Yamabe problem with Dirichlet condition), we have
the main result of this paper.

\begin{Thm}\label{existence} Assume the conditions as in
Theorem \ref{uniformbound}. There is at least one positive
solution to the scalar curvature problem (\ref{scal}).
\end{Thm}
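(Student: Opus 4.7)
The plan is to treat $\mu$ in (\ref{eq1}) as a bifurcation parameter and to produce a solution of (\ref{scal}) as the limit of positive solutions of (\ref{eq1}) along a bifurcation branch as $\mu\to 0^{+}$. The overall strategy follows the framework of \cite{Du}: the trivial branch $u\equiv 0$ serves as a reference solution, global continuation produces a continuum of nontrivial positive solutions, and the apriori bounds together with the Harnack inequality guarantee that the limit is a nontrivial positive solution of the original problem.

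First I would recast (\ref{eq1}) as a fixed-point equation $u=T(\mu,u)$ in a suitable weighted function space on $R^n$ in which $T$ is completely continuous in $u$. The pair $(\mu,0)$ is a solution for every $\mu\ge 0$, and the linearization along this trivial branch reduces to an eigenvalue problem for $-\Delta$ on a natural reference domain. At a distinguished value $\mu_0>0$ the Leray--Schauder index of $T(\mu,\cdot)$ at $0$ jumps, so Rabinowitz's global bifurcation theorem produces an unbounded continuum $\mathcal{C}$ of positive solutions emanating from $(\mu_0,0)$. By Theorem \ref{uniformbound}(I), positive solutions of (\ref{eq1}) with $\mu>0$ are uniformly bounded in $L^{\infty}(R^n)$, and standard elliptic regularity upgrades this to uniform $C^{2}_{b}$ bounds. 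Hence $\mathcal{C}$ cannot be unbounded in the $u$-direction; its projection on the $\mu$-axis must therefore extend down to $\mu=0$. Selecting $(\mu_n,u_n)\in\mathcal{C}$ with $\mu_n\to 0^{+}$, the uniform $C^{2}_{b}$ bound together with Arzel\`a--Ascoli yields, along a subsequence, $u_n\to u_\infty$ in $C^{2}_{\text{loc}}(R^n)$, where $u_\infty\ge 0$ solves (\ref{scal}) and decays to $0$ at infinity in view of (\ref{negative}).

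The main obstacle is to rule out $u_\infty\equiv 0$, and this is precisely why the Harnack estimate of Main Result \ref{main} is indispensable. Along $\mathcal{C}$ the branch cannot close back to the trivial solution at a second value without contradicting either the simplicity of $\mu_0$ as a bifurcation point or the structure of the apriori estimates, so $(\mu_n,u_n)$ can be chosen with $\|u_n\|_{L^{\infty}}$ bounded below by a positive constant independent of $n$. The decay condition (\ref{negative}) together with the uniform bound localizes the maxima of $u_n$ in a compact region meeting $\{K>0\}$. The Harnack inequality of Main Result \ref{main}, applied on balls around the critical points of $K$ where sub-harmonicity holds and where $\sup_{B_R}u_n$ is comparable to $\sup_{B_{3R}}u_n$, then furnishes a uniform positive lower bound for $u_n$ on a fixed compact set. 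Passing to the limit yields $u_\infty\not\equiv 0$, and the strong maximum principle gives $u_\infty>0$ on $R^n$, which is the desired positive solution of (\ref{scal}).
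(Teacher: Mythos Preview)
Your proposal captures the broad philosophy of the paper---produce solutions of (\ref{eq1}) for $\mu>0$, use apriori bounds to pass to $\mu\to 0^{+}$, and invoke a Harnack inequality to prevent degeneration to zero---but the implementation diverges from the paper's in a way that creates a genuine gap.

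The first and most serious problem is your bifurcation set-up. You propose to work directly on $R^{n}$ and to bifurcate from the trivial branch $(\mu,0)$ at a ``distinguished value $\mu_{0}>0$'' where the linearization $-\Delta v=\mu v$ has an eigenvalue. But $-\Delta$ on $R^{n}$, in any of the usual weighted spaces, has purely continuous spectrum $[0,\infty)$; there is no isolated simple eigenvalue at which the Leray--Schauder index can jump, so Rabinowitz's theorem does not apply as stated. The paper never bifurcates from zero on $R^{n}$. Instead it invokes Theorem~\ref{BCN} (constrained minimization on large balls $B_{R}$, where a genuine principal eigenvalue $\mu_{1}(B_{R})$ does exist), and then uses the monotonicity and comparison arguments of \cite{Du} to obtain, for each $\tau\in(0,\tau^{*})$, a \emph{minimal} positive solution $u_{\tau}$ on $R^{n}$ together with a \emph{second} positive solution $u^{\tau}\notin[0,u_{\tau}]$ (Theorems~\ref{DM0801} and~\ref{DM0802}).

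The second problem is the nontriviality step. Saying the branch ``cannot close back'' does not by itself yield a uniform positive lower bound on $\|u_{n}\|_{L^{\infty}}$ as $\mu_{n}\to 0$: a continuum emanating from $(\mu_{0},0)$ could perfectly well approach the corner $(0,0)$. The paper's device is concrete and relies on the order structure just described: it takes the second solutions $u_{j}=u^{\tau_{j}}$ with $\tau_{j}\to 0$ and compares them with the \emph{fixed} reference $u_{\tau_{1}}$ for a fixed $\tau_{1}\in(0,\tau^{*})$, obtaining points $z_{j}$ in a fixed compact set with $u_{j}(z_{j})>u_{\tau_{1}}(z_{j})\geq c>0$. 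From this pointwise lower bound the classical Harnack inequality gives $\min_{B_{r}}u_{j}\geq c'>0$ on a fixed ball, and \emph{then} the Harnack inequality (\ref{harnack}) of Theorem~\ref{main2} converts this into a uniform upper bound on all of $R^{n}$, allowing passage to a nontrivial limit. Your outline reverses the logic---invoke the uniform bound first and extract nontriviality from branch topology afterwards---which does not work without the minimal/non-minimal comparison the paper sets up.
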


To prove Theorem \ref{existence}, we need to study the following
problem in the large ball $B_R(0)$ for $\mu>0$
\begin{equation}\label{eq2}
-\Delta u=\mu u+K(x)u^p, \; \; u>0 \; \;  in \; \; B_R(0),
\end{equation}
with the Dirichlet boundary condition
\begin{equation}\label{boundary}
u=0, \; \; on \; \; \partial B_R(0).
\end{equation}
 We can show

\begin{Thm}\label{existence2} Assume the conditions as in
Theorem \ref{uniformbound}. There are two constants
$\Gamma_1<\Gamma$ such that for each $\mu\in (\Gamma_1,\Gamma)$,
there is at least two positive solutions to the problem \ref{eq2}
with the Dirichlet boundary condition. Furthermore, the solutions
are uniformly bounded with the bound independent of $R>>1$.
\end{Thm}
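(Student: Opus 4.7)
The plan is to combine an $R$-uniform a priori $L^\infty$-bound for positive solutions of \eqref{eq2}-\eqref{boundary} with Rabinowitz's global bifurcation theorem applied at the first Dirichlet eigenvalue $\mu_*(R) := \lambda_1(-\Delta; B_R)$.

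First I would adapt the proof of Theorem \ref{uniformbound} to the Dirichlet problem on $B_R$, producing an $L^\infty$-bound for positive solutions which is independent of $R \gg 1$ on each compact subinterval of $(0,\infty)$ in the parameter $\mu$. The blow-up analysis behind Theorem \ref{uniformbound} is entirely local: the Harnack inequality of Theorem \ref{main2} handles neighborhoods of critical points of $K$, the moving-plane estimate under (iv) handles a neighborhood of $D_0$, and on $D_-$ one appeals to the boundary-blow-up barrier from \cite{DM}. Once $R$ is so large that $B_R$ contains $\overline{D_+}$ together with the prescribed neighborhood of $D_0$, the Dirichlet boundary lies far from every potential blow-up region and the constants from the $R^n$-estimate transfer unchanged to $B_R$.

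Next, I would recast the problem as the compact fixed-point equation $u = T_\mu(u) := (-\Delta_{B_R})^{-1}(\mu u + K u^p)$ on $C^{1,\alpha}_0(\overline{B_R})$, so that the trivial line $\{(\mu, 0)\}$ consists of solutions. The linearization has a one-dimensional kernel spanned by the positive first eigenfunction $\varphi_1$ exactly at $\mu = \mu_*$, and the Crandall--Rabinowitz theorem produces a smooth local arc $(\mu(s), u(s))$ of nontrivial positive solutions with $u(s) = s\varphi_1 + o(s)$ and
$$
\mu(s) - \mu_* = -\,\frac{\int_{B_R} K\,\varphi_1^{p+1}}{\int_{B_R} \varphi_1^{2}}\, s^{\,p-1} + o(s^{\,p-1}).
$$
Rabinowitz's global alternative then extends this arc to an unbounded connected component $\mathcal{C}^+$ of positive solutions through $(\mu_*, 0)$; return to a higher eigenvalue point $(\lambda_k, 0)$ with $k \geq 2$ is ruled out since the corresponding eigenfunctions change sign. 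Combined with the preceding $L^\infty$-bound, the $u$-norm along $\mathcal{C}^+$ is bounded on every bounded $\mu$-interval, so unboundedness of $\mathcal{C}^+$ forces its $\mu$-projection to be unbounded and $\mathcal{C}^+$ to possess at least one turning point.

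Setting $\Gamma$ and $\Gamma_1$ to be the $\mu$-coordinates of the bifurcation point $(\mu_*, 0)$ and of the first turning point of $\mathcal{C}^+$, ordered so that $\Gamma_1 < \Gamma$ (the ordering being determined by the sign of $\int_{B_R} K\varphi_1^{p+1}$ in the local direction formula), for every $\mu \in (\Gamma_1, \Gamma)$ the slice of $\mathcal{C}^+$ above $\mu$ contains at least two distinct points: one on the initial arc between $(\mu_*, 0)$ and the turning point, the other on the continuation immediately past it. This yields two positive solutions of \eqref{eq2}-\eqref{boundary}, with $L^\infty$-norms controlled by the constant of the first step and hence independent of $R \gg 1$. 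The principal obstacle is the first step, namely checking that every piece of Theorem \ref{uniformbound}'s proof carries an $R$-independent constant; a secondary technical point is the sign analysis of $\int_{B_R} K\varphi_1^{p+1}$, which pins down the direction of the local bifurcation and therefore which of $\Gamma_1, \Gamma$ equals $\mu_*(R)$.
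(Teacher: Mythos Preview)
Your overall bifurcation strategy matches what the paper has in mind: it invokes the Crandall--Rabinowitz and Rabinowitz global framework (citing \cite{CR71}, \cite{R71}, \cite{Du}), verifies for large $R$ that $\int_{B_R}K\phi_R^{p+1}<0$ (your ``secondary technical point''; the paper carries out this computation explicitly just after Theorem~\ref{BCN}), and records the $R$-independent a~priori bound as Theorem~\ref{DM0801}(i2). The paper does not write out a separate proof of Theorem~\ref{existence2}; it is meant to follow from these ingredients together with Theorem~\ref{BCN}.

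There is, however, a real gap in your deduction of the turning point. From ``the $u$-norm along $\mathcal C^+$ is bounded on every bounded $\mu$-interval'' and ``$\mathcal C^+$ is unbounded'' you can only conclude that the $\mu$-projection of $\mathcal C^+$ is unbounded; nothing you have written excludes the possibility that $\mu$ runs monotonically to $+\infty$ along the branch, in which case no turning point occurs and no multiplicity interval is produced. The missing ingredient, which the paper explicitly supplies, is the \emph{nonexistence} half of Theorem~\ref{BCN}: under $\int_{B_R}K\phi_R^{p+1}<0$ there is a finite $\tau^*>\mu_1(B_R)$ such that (\ref{eq2})--(\ref{boundary}) admits no positive solution for $\mu>\tau^*$. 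This caps the $\mu$-projection from above; combined with the supercritical bifurcation direction and your a~priori bound, the unbounded continuum, which emanates to the right of $\mu_1$, is forced to bend back, producing the turning point and the multiplicity interval $(\Gamma_1,\Gamma)=(\mu_1,\tau^*)$. Once you insert this nonexistence step, your outline is complete and coincides with the paper's route.
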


As a sharp different, we show that
\begin{Thm}\label{DM0802}
Assume the conditions (i-iv) for $K$ are true in $R^n$. There exists
a positive constant $\tau^*$ such that
 for each $\tau\in (0, \tau^*)$, (\ref{eq1}) in $R^n$ has a minimal
 positive solution $u_{\tau}$ and at least
 another positive solution $u^{\tau}$, which is not in the order
 interval $[0,u_{\tau}]$, and there is no positive solution when $\tau>\tau^*$. Moreover,
 $u_{\tau_1}\leq u_{\tau_2}$ for $0<\tau_1<\tau_2<\tau^*$ and
 there is a uniform constant $C$ depending only on $\tau$ and $K$
 such that for each $\tau\in (0, \tau^*)$ and for any positive solution $u$
 to (\ref{BCN1995}) in $R^n$, we have
$$
|u(x)|\leq C, \; \; for \; \; x\in R^n.
 $$
 Furthermore, the bound is independent of $\tau\geq 0$ if $x$ is
 outside of a large ball.
\end{Thm}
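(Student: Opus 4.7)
The plan is to combine the a priori bound already provided by Theorem \ref{uniformbound} with a sub/super-solution construction for the minimal branch $u_\tau$, and a global bifurcation argument on the finite-ball approximation (\ref{eq2})--(\ref{boundary}), followed by passage to the limit $R\to\infty$, to produce the second solution $u^\tau$. The analogous strategy for subcritical indefinite problems was developed in \cite{Du}; the content to be checked here is that it survives the critical exponent, where the Harnack-type control from the preceding sections is essential.

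For the minimal solution and the monotonicity, I would construct, for each small $\tau>0$, an ordered pair: a small bump $\varepsilon\varphi$, with $\varphi>0$ compactly supported in $D_+$ and $\varepsilon$ chosen small, is a subsolution; a supersolution is built by solving an appropriate linear Dirichlet problem on a large ball and matching it to a radial decaying tail outside, using $K_\infty<0$. Monotone iteration delivers the minimal positive solution $u_\tau$, and $u_{\tau_1}\leq u_{\tau_2}$ for $\tau_1<\tau_2$ follows because $u_{\tau_2}$ is a supersolution for the $\tau_1$-problem. Setting $\tau^* := \sup\{\tau>0 : (\ref{eq1}) \text{ admits a positive solution}\}$, finiteness is obtained by testing the equation against the principal Dirichlet eigenfunction of a ball containing $D_+$; combined with $K_\infty<0$, this yields an eigenvalue-type obstruction forbidding positive solutions for $\tau$ too large.

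For the second solution, I would apply Rabinowitz's global bifurcation theorem to (\ref{eq2})--(\ref{boundary}): the trivial branch $u\equiv 0$ emits an unbounded continuum $\mathcal{C}_R$ of positive solutions at $\tau=\lambda_1(B_R)$. The uniform bound of Theorem \ref{uniformbound}, which applies on $B_{3R}$ thanks to conditions (ii)--(iii) and the Harnack inequality of Main Result \ref{main}, prevents the $u$-component of $\mathcal{C}_R$ from blowing up; since positive solutions only exist for $\tau\leq\tau^*(R)<\infty$, the continuum must fold back, yielding two solutions for $\tau$ in an interval $(\Gamma_1(R),\Gamma(R))$, which is exactly Theorem \ref{existence2}. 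Letting $R\to\infty$, interior elliptic regularity together with a diagonal compactness argument produces the limiting $u^\tau$ on $\mathbb{R}^n$. To guarantee $u^\tau\notin [0,u_\tau]$, I would use that the upper branch on $B_R$ strictly dominates the minimal one at the fold point, monitoring a fixed interior $x_0$ where $u^\tau_R(x_0)\geq u_\tau(x_0)+\delta$ for some $\delta>0$ independent of $R$, so the inequality is preserved in the limit.

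The $\tau$-independent tail bound follows from $K_\infty<0$: outside a large ball $B_{R_0}$ one has $K(x)\leq \tfrac{1}{2}K_\infty<0$, hence any positive solution satisfies $-\Delta u\leq \tau u - c\,u^p$ with $c>0$; comparison with a decaying radial supersolution of a pure $-\Delta w=-c'\,w^p$ equation, matched on $\partial B_{R_0}$ using the interior bound, delivers the tail control. The principal obstacle is not any single step but the interplay of the three pieces: the bifurcation framework of \cite{Du} is designed for subcritical problems where compactness of the continuum is automatic, so absent the uniform bound in Theorem \ref{uniformbound} (which itself rests on the critical-exponent Harnack inequality of Main Result \ref{main}) the continuum $\mathcal{C}_R$ could blow up at an interior $\tau$ and the second solution would be lost. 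Verifying that the second branch stays bounded uniformly in $R$ and that its limit genuinely leaves the order interval $[0,u_\tau]$ is therefore the most delicate point.
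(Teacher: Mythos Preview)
Your overall strategy --- bifurcation on the finite-ball problems (\ref{eq2})--(\ref{boundary}) via \cite{Du}, the a~priori bound from Theorem~\ref{uniformbound}/Main Result~\ref{main}, and passage to the limit $R\to\infty$ --- is exactly the route the paper takes: it simply invokes Theorem~\ref{DM0801} (the finite-domain version, with its minimal solutions $u_{\tau,R}$ and their monotonicity in $(\tau,R)$) and then appeals to \cite{Du} for the existence part. So in spirit you are aligned with the paper.

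There is, however, a genuine error in your treatment of the behaviour at infinity. You twice build a \emph{decaying} radial tail: once as a supersolution for the minimal branch, and once for the $\tau$-independent tail bound, comparing with a solution of $-\Delta w=-c'w^{p}$. For $\tau>0$ this cannot work. Outside a large ball one has $K(x)\le \tfrac12 K_{\infty}<0$, so the zero of $s\mapsto \tau s+K(x)s^{p}$ sits at the positive constant $s_{\tau}(x)\approx(\tau/|K_{\infty}|)^{1/(p-1)}$; any bounded positive solution of (\ref{eq1}) is driven toward this value, not toward $0$. Indeed, if $u(x)\to 0$ then eventually $-\Delta u\approx \tau u>0$, and there is no positive decaying solution of $-\Delta u=\tau u$ on an exterior domain with $\tau>0$. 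The paper records precisely this: in the case of Theorem~\ref{DM0802} every positive solution satisfies $\lim_{|x|\to\infty}u(x)=(\tau/|K_{\infty}|)^{1/(p-1)}$, and it is \emph{this} asymptotic constant, combined with the Harnack inequality (\ref{harnack}), that yields the uniform bound.

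Concretely, your decaying supersolution fails to be a supersolution near infinity (the linear term $\tau u$ dominates $|K|u^{p}$ when $u$ is small), so neither the construction of $u_{\tau}$ by direct sub/super-solutions on $\mathbb{R}^{n}$ nor the tail estimate goes through as written. The fix is to follow the paper: obtain $u_{\tau}$ as the monotone limit of the $u_{\tau,R}$ from Theorem~\ref{DM0801} (which needs no global supersolution), and obtain the tail bound from the boundary blow-up barrier of Section~\ref{apriori0} on the set $\{K\le -\delta\}$, which gives a bound of order $(\tau^{*}/|K_{\infty}|)^{1/(p-1)}$ outside a large ball, independent of $\tau\in(0,\tau^{*})$.
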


One may see \cite{Ma2} for the existence result of the scalar
curvature problem with nontrivial Dirichlet boundary condition.

Here is the plan of the paper. In the first part of this paper, we
prove the existence results based on the apriori estimate. Then we
prove the apriori estimate on the non-positive part of the scalar
curvature $K$ in section \ref{apriori0}. In section \ref{sect1},
we obtain apriori estimate based on Harnack inequality on the
positive part of the scalar curvature $K$. The main apriori
estimate is the Harnack inequality (Theorem \ref{main}), whose
proof is contained in the remaining sections. In the proof of
Theorem \ref{main}, we shall use the some arguments from Chen-Lin
\cite{CL9} and Li-Zhang \cite{LZ16} (see also \cite{Z27}).

\section{Proofs of Theorem \ref{existence}  and others assuming the apriori
bound}\label{bifurcation}

 In this section, we give the proof of
Theorem \ref{existence} according to the bifurcation method used
in \cite{Du} (see also \cite{BCN95},\cite{BG2} and \cite{GPR10}
for related problems). In particular, we recall the following
result in \cite{BCN95}.

\begin{Thm} \label{BCN} For any $p>1$ and any continuous function $m$ on the closure
of the bounded smooth domain $\Omega$ of $R^n$, if $\phi$ denotes
the eigenfunction associated with $\mu_1$, which is the principal
eigenvalue of the operator $$ -\Delta +m(x)
$$ on $\Omega$ with the zero Dirichlet boundary condition,
and $K(x)$ takes both positive and negative values, the following
assertion holds. If i) $\int_{\Omega}K(x)\phi^{p+1}(x)dx < 0$, then
there exists $\tau^*=\tau^*(\Omega)>\mu_1$ such that problem
\begin{equation}\label{BCN1995}
 -\Delta u +(m(x)-\tau)u=K(x)u^p, \; \; in \; \;
\Omega,
\end{equation}
with the zero Dirichlet boundary condition or zero Newmann boundary
condition, has a solution for every $\tau\in [\mu_1,\tau^*)$, while
no solution exists for $\tau>\tau^*$. Conversely, condition i) is
also a necessary condition for existence of solutions.
\end{Thm}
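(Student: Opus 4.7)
The plan is to combine the Crandall--Rabinowitz local bifurcation theorem at $(\tau,u)=(\mu_1,0)$ with Rabinowitz's global alternative, using the sign condition (i) to fix the direction of bifurcation and then an a priori bound on $\tau$ to identify $\tau^*$.

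First I would recast \eqref{BCN1995} as a compact fixed-point equation $u = T(\tau,u) := (-\Delta+m+C)^{-1}((\tau+C)u + K(x)u^p)$ on $X = C^{1,\alpha}_0(\bar\Omega)$ for $C$ large (the Neumann case being analogous). The Fr\'echet derivative at $u=0$ has characteristic value $1$ exactly at $\tau=\mu_1$ with simple eigenvector $\phi$, and the usual transversality condition holds, so Crandall--Rabinowitz yields a local $C^1$ curve $s\mapsto(\tau(s),u(s))$ with $u(s)=s\phi+sw(s)$, $\int w(s)\phi\,dx=0$. Pairing \eqref{BCN1995} with $\phi$ and using $-\Delta\phi+m\phi=\mu_1\phi$ produces the basic identity
\begin{equation*}
(\mu_1-\tau)\int_\Omega u\phi\,dx = \int_\Omega K(x)\,u^p\phi\,dx,
\end{equation*}
valid for any solution $u$. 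Substituting $u(s)$ and letting $s\to 0^+$ yields
\begin{equation*}
\tau(s)-\mu_1 = -\,s^{p-1}\,\frac{\int_\Omega K\phi^{p+1}\,dx}{\int_\Omega\phi^2\,dx}+o(s^{p-1}),
\end{equation*}
so hypothesis (i) forces $\tau(s)>\mu_1$ for small $s>0$ and the local branch consists of strictly positive solutions for $\tau$ slightly above $\mu_1$.

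Next, Rabinowitz's global bifurcation theorem implies that the maximal connected component $\mathcal{C}^+$ of positive solutions through $(\mu_1,0)$ is either unbounded in $\mathbb{R}\times X$ or returns to the trivial branch at another eigenvalue; the latter is excluded because bifurcation from the trivial branch into the positive cone can occur only at $\mu_1$. To locate $\tau^*$ I would prove an a priori upper bound on $\tau$ via a test-function argument exploiting the sign change of $K$, showing that no positive solution exists for $\tau$ above some $\tau^{**}<\infty$. Setting $\tau^*:=\sup\{\tau : \eqref{BCN1995} \text{ admits a positive solution}\}$ then gives $\mu_1<\tau^*<\infty$, and for each $\tau\in(\mu_1,\tau^*)$ a positive solution is supplied by the connectedness of $\mathcal{C}^+$ combined with a $\tau$-uniform $\|u\|$-bound on compact subintervals of $(\mu_1,\tau^*)$ (to prevent the continuum from pinching off early), or equivalently by a Leray--Schauder degree continuation using the same bounds.

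For the converse direction, tracing a sequence of positive solutions $(\tau_n,u_n)$ with $\tau_n\to\mu_1^+$ along $\mathcal{C}^+$ and renormalizing $u_n$ recovers the expansion from the first step, so passing to the limit in the basic identity after the appropriate rescaling produces $\int_\Omega K\phi^{p+1}\,dx\le 0$, with strict inequality from $\tau_n>\mu_1$. The main obstacle will be the pair of a priori bounds---an upper bound on $\tau$ for positive solutions and a $\tau$-uniform $\|u\|$-bound on compact subintervals of $(\mu_1,\tau^*)$---both of which must exploit the indefinite structure of $K$ and the superlinear growth $u^p$ simultaneously, and which in the critical exponent regime require a concentration-compactness or blow-up analysis to rule out bubbling in $\{K>0\}$.
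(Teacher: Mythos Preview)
The paper does not prove this theorem itself; it is quoted from \cite{BCN95}, and the only information the paper gives about its proof is the remark that ``the existence part of the proof is obtained by a constrained minimization method and the necessary part is derived from a generalized Picone identity.''  Your proposal is therefore a genuinely different route on both halves.

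For existence, the approach in \cite{BCN95} is to fix $\tau$ and minimize the quadratic form $\int_\Omega |\nabla u|^2+(m-\tau)u^2$ over a constraint manifold built from $K$ (of the type $\int_\Omega K|u|^{p+1}=\mathrm{const}$); hypothesis (i) is precisely what makes this constraint set nonempty with negative infimum for $\tau$ near $\mu_1$.  This produces a solution \emph{directly} for each admissible $\tau$, and in particular it does not require any a priori $L^\infty$ bound on the set of solutions.  Your bifurcation/continuation scheme, by contrast, needs exactly such bounds to push the continuum $\mathcal C^+$ across every $\tau\in(\mu_1,\tau^*)$; you flag this honestly as ``the main obstacle,'' but for the theorem as stated (arbitrary $p>1$, hence possibly critical or supercritical) those bounds are not available in general, which is why \cite{BCN95} avoids continuation altogether.

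For the converse your argument has a gap.  You recover (i) by tracing solutions \emph{along your own branch} back to $(\mu_1,0)$ and passing to the limit in the expansion $u(s)=s\phi+o(s)$.  But the necessity statement concerns an arbitrary positive solution at some $\tau>\mu_1$, with no assumption that it lies on $\mathcal C^+$ or is small.  Your basic identity $(\mu_1-\tau)\int_\Omega u\phi = \int_\Omega K\,u^p\phi$ only yields $\int_\Omega K\,u^p\phi<0$, which is not the condition $\int_\Omega K\,\phi^{p+1}<0$.  The generalized Picone identity used in \cite{BCN95} closes exactly this gap: applied to an arbitrary positive solution $u$ and the eigenfunction $\phi$, it produces an inequality that forces the sign of $\int_\Omega K\,\phi^{p+1}$ without any smallness or branch assumption.
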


We remark that the existence part of the proof is obtained by a
constrained minimization method and the necessary part is derived
from a generalized ※Picone identity§. For our case where
$m(x)=0$, $\Omega=B_R(0)$ for large $R>1$, and
$p=\frac{n+2}{n-2}$, the condition (i) can be easily verified.

In fact, let $\phi_1(x)$ be the first eigenvalue of $-\Delta$ on
the unit ball $B_1(0)$, we have $\phi_R(x)=\phi_1(x/R)$ and
$\mu_R=\mu_1/R^2$ on $B_R$. Then
$$
\int_{B_R}K(x)\phi_R^{p+1}(x)dx=R^n\int_{B_1(0)}K(Ry)\phi_1(y)^{p+1}dy
$$
$$
\; \;
=R^n(\int_{B_{R_0/R}}+\int_{B_1-B_{R_0/R}})K(Ry)\phi_1^{p+1}.
$$
Note that $K(Ry)\leq -A<0$ for $y\in B_1-B_{R_0/R}$ and
$|K(Ry)|\leq B$ in $B_1$, hence we have
$$
\int_{B_1-B_{R_0/R}}K(x)\phi_1^{p+1}\leq
-A\int_{B_1-B_{R_0/R}}\phi_1^{p+1}\to -A\int_{B_1}\phi_1^{p+1}<0
$$
 and
$$
\int_{B_{R_0/R}}K(Ry)\phi_1^{p+1}\to 0, \; R\to \infty,
$$
Using this we have
$$
\int_{B_R}K(x)\phi_R^{p+1}(x)dx<0
$$
for large $R>0$.

Actually we can extend their result when $p=\frac{n+2}{n-2}$ and
when the large ball $B_R$ is contained in $\Omega$ in the
following way.

\begin{Thm}\label{DM0801}
Assume the conditions (i-iv) for $K$ are true in $\Omega$.
 For each $\tau\in (\mu_1, \tau^*)$, (i1) (\ref{BCN1995}) has a minimal
 positive solution $u_{\tau, \Omega}$ in the sense that any positive
 solution $u$ to (\ref{BCN1995}) satisfies $u\geq u_{\tau,\Omega}$ in
 $\Omega$; (i2) there is a uniform constant depending only on
 $R$ and $K$ such that
 $$
|u(x)|\leq C, \; \; for \; \; x\in\Omega
 $$
 for any positive solution to (\ref{BCN1995}) in $\Omega$; (i3)
 moreover when $\Omega=B_R$ and setting $\tau^*_R=\tau^*(B_R)$ and $u_{\tau,R}=u_{\tau, B_R}$, we
 have
 $$
\tau^*_R\leq \tau^*_r, \; \; r<R
 $$
 and
 $$
u_{\tau_1,r}\leq u_{\tau_2, R}, \; \; (\tau_1,r)<(\tau_2,R)
 $$
\end{Thm}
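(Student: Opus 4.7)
The plan is to combine the three kinds of a priori $L^\infty$ bounds catalogued in the introduction with a sub- and supersolution argument in the spirit of the proof of Theorem \ref{BCN}, now upgraded to the critical exponent $p=(n+2)/(n-2)$. For statement (i2) I would patch together: (a) the boundary-blow-up supersolution bound of \cite{DM} on $D_-$; (b) Lin's moving-plane bound near the zero set $D_0$ under assumption (iv) (or its variant \cite{DL} using (\ref{zero})); and (c) the Harnack inequality of Theorem \ref{main} on $D_+$ combined with Schoen's blow-up analysis to rule out concentration at the isolated critical points of $K$. A finite covering of $\overline{\Omega}$ by neighborhoods of these three types then yields a single constant depending only on $R$ and $K$ that bounds every positive solution of (\ref{BCN1995}) in $L^\infty(\Omega)$.

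For (i1), existence of a positive solution at each $\tau\in(\mu_1,\tau^*)$ is given by Theorem \ref{BCN}; to upgrade to a minimal one I would use monotone iteration. Because $\tau>\mu_1$, a small multiple $\epsilon\phi$ of the principal Dirichlet eigenfunction of $-\Delta$ on $\Omega$ is a subsolution:
$$-\Delta(\epsilon\phi)-\tau(\epsilon\phi)-K(\epsilon\phi)^p=(\mu_1-\tau)\epsilon\phi-\epsilon^p K\phi^p\le 0$$
once $\epsilon^{p-1}\|K\|_\infty\|\phi\|_\infty^{p-1}\le \tau-\mu_1$. Iterating upward via $u_{k+1}=(-\Delta+\lambda)^{-1}((\tau+\lambda)u_k+Ku_k^p)$ with $\lambda$ chosen so large that the right-hand side is monotone increasing in $u$ on the a priori interval $[0,C]$ from (i2) produces an increasing bounded sequence whose limit $u_{\tau,\Omega}$ is a positive solution. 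A Hopf-lemma comparison, using the uniform $C^1$ bound implied by (i2), shows that every positive solution lies above $\epsilon\phi$ for $\epsilon$ small, hence above $u_{\tau,\Omega}$ by the same iteration, making $u_{\tau,\Omega}$ the minimal positive solution.

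For (i3), both monotonicities reduce to comparison between concentric balls. To prove $\tau^*_R\le \tau^*_r$ for $r<R$, assume for contradiction that some $\tau\in(\tau^*_r,\tau^*_R)$ exists; a positive solution to (\ref{BCN1995}) on $B_R$ then exists for this $\tau$, and its restriction to $B_r$ is a supersolution with positive boundary trace, while $\epsilon\phi_r$ (with $\phi_r$ the principal Dirichlet eigenfunction of $-\Delta$ on $B_r$) is a subsolution with zero trace, since $\tau>\tau^*_r>\mu_1(B_r)$. Sub-super iteration between them produces a positive Dirichlet solution on $B_r$ at this $\tau$, contradicting $\tau>\tau^*_r$. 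To show $u_{\tau_1,r}\le u_{\tau_2,R}$ for $\tau_1\le \tau_2$ and $r\le R$, observe that $u_{\tau_2,R}|_{B_r}$ satisfies $-\Delta u=\tau_2 u+Ku^p\ge \tau_1 u+Ku^p$ in $B_r$ and is positive on $\partial B_r$, so is a supersolution of the $\tau_1$-Dirichlet problem on $B_r$; monotone iteration from $\epsilon\phi_r$ yields a solution $v$ with $v\le u_{\tau_2,R}|_{B_r}$, and minimality of $u_{\tau_1,r}$ gives $u_{\tau_1,r}\le v\le u_{\tau_2,R}|_{B_r}$.

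The main obstacle is (i2): away from the critical points of $K$ on $\{K>0\}$ the a priori bound is by now standard, but at those critical points themselves it depends precisely on the Harnack inequality of Theorem \ref{main}, whose proof is the central analytical task of the remainder of the paper.
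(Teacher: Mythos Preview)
Your proposal is essentially correct and matches the paper's strategy. The paper dispatches (i1) and (i3) by citing \cite{Du}, and your sub/supersolution and monotone-iteration arguments are exactly the kind of detail that reference hides; likewise your patching argument for (i2) on $D_-$, near $D_0$, and on $D_+$ is what the paper means when it invokes \cite{DM}, \cite{DL}, \cite{L14} together with the Harnack inequality.

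One point where the paper's route is more direct than yours: at the critical points of $K$ in $D_+$, you appeal to ``Schoen's blow-up analysis to rule out concentration,'' which is the full machinery of Section~\ref{sect1}. The paper instead observes that any positive Dirichlet solution on $\Omega$ is bounded below by $\epsilon\phi$ (it lies above the minimal solution, which is built from the subsolution $\epsilon\phi$), and then the Harnack inequality (\ref{harnack}), namely $\max u\cdot\min u\le C R^{2-n}$, immediately converts this lower bound into an upper bound near each critical point. This shortcut exploits the Dirichlet setting and avoids the Pohozaev/isolated-simple-blow-up analysis altogether for the purpose of (i2). Note also that in your write-up you derive the uniform lower bound $u\ge\epsilon\phi$ in (i1) \emph{using} the $C^1$ bound from (i2); the paper's ordering is the reverse, obtaining the lower bound first (from the monotone iteration starting at $\epsilon\phi$, which needs only the BCN solution as a supersolution and not the full a priori estimate) and then feeding it into (i2). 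Either ordering can be made to work, but be careful your version is not circular.
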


\begin{proof} (i1) and (i3) can be proved in the same way as in
\cite{Du}. Assume that $u$ is any positive solution to
(\ref{BCN1995}) in $\Omega$. To prove (i2), we note that by the
results obtained in \cite{DM}, \cite{DL}, and \cite{L14}, $u$ is
uniformly bounded away from the critical points of $K$ in the
positive part of $K$. However, near the critical point of $K$ in
positive part of $K$, we have the Harnack inequality (see the coming
sections). Since $u$ has a uniform lower bound given by $\epsilon
\phi$ on $\Omega$, we have the uniform bound for $u$.

\end{proof}

Using Theorem \ref{DM0801}, we can prove Theorem \ref{DM0802}. We
remark that in the case of Theorem \ref{DM0802}, all the positive
solutions have the behavior at infinity: $$ \lim_{|x|\to\infty}
u(x)=-K_{\infty}/\mu,
$$
and with this and our Harnack inequality \ref{harnack} (see
below), we know that all solutions to (\ref{eq1}) are uniformly
bounded. The proof of this result is omitted here since the proof
of the existence part is in \cite{Du}.

We are now in the position to prove Theorem \ref{existence}.

\begin{proof}
Take a fixed $0<\tau_1<\tau^*$ and sequence $0<\tau_j\to 0$ with
the solution sequence $u_j=u^{\tau_j}$. We then define
$$
u(x)=\lim_{j\to \infty} u_j(x).
$$
We want to show that $u$ is a non-trivial smooth positive solution
to ({eq1}) with $\mu=0$. According to our choice of $u_j$, we have
a bounded sequence of point $z_j$ such that
$$
u_j(z_j)>u_{\tau_1}(z_j).
$$
Assume that $\lim_{j\to \infty}z_j=z^*$.  we can assume that
$$
u_j(z_j)>\frac{1}{2}u_{\tau_1}(z^*).
$$
Using the standard Harnack inequality we have a uniform ball $B_r$
such that
$$
\min_{B_r}u_j\geq \frac{1}{200}u_{\tau_1}(z^*).
$$
Then using the Harnack inequality in Theorem \ref{main2} in
section \ref{sect2} again, we have $u_j$ is uniformly bounded in
the whole space. Using the elliptic theory, we have a convergent
subsequence, still denoted by $u_j$. Hence the limit $u$ is a
positive bounded solution to (\ref{eq1}) on $R^n$ with $\mu=0$.
This completes the proof of Theorem \ref{existence}.
\end{proof}

\section{apriori bound on non-positive part}\label{apriori0}

Fix $\delta>0$. On the domain $\{x\in R^n, K(x)\leq -\delta\}$,
one can see easily that for small $R>0$, the boundary blow up
function
$$
B(x)=\frac{1}{(R^2-|x-x_0|^2)^a}, \; \; x\in B_R(x_0)
$$
is a super-solution to (\ref{eq1}) in the ball $B_R(x_0)$. Hence,
we obtain the uniform bound for all solutions on this part.

 On the domain $\{x\in R^n, |K(x)|\leq \delta\}$, one can use the
 moving plane method and blow up trick to get the uniform bound
 for all solutions on this part too. For more detail, one may
 refer \cite{CL971}or \cite{L19} to the proof.

\section{apriori bound on positive part based on Harnack
inequality and Pohozaev identity}\label{sect1}

In this section, we firstly recall the general blow up trick often
used in the study of scalar curvature problem on bounded smooth
domains. Then we show the uniform bound for solutions to
(\ref{eq1}).

For fix $y\in R^n$, we define
$$
v(x)=v_y(x)=\tau^{\frac{n-2}{2}}u(\tau x+y),
$$
which satisfies that
$$
-\Delta v=\lambda\tau^2 u+K(\tau x+y)v^p.
$$

 Fix $R>0$. Consider $f(x)=(R-|x|)u^{1/a}$. Assume
not. Then there are a sequence of solutions $\{u_j\}$ and  a
sequence $\{x_j\}$ ($|x_j|\leq 1$) such that
$$
f_j(x_j)=(R-|x_j|)u_j(x_j)^{1/a}=\max (R-|x|)u_j^{1/a}>j\to
\infty,
$$
which implies that
$$
u_j(x_j)\to \infty.
$$

Let $\lambda_j=u_j(x_j)^{-1/a}$. Let
$$
v_j(x)=u_j(x_j)^{-1}u_j(x_j+\lambda_j x).
$$

Note that for $|x|\leq R/2$, we have
$$
u(x)\leq 2^{2a}u_j(x_j).
$$
The corresponding domain for $|v_j(z)|\leq 2^{2a}$ contains the
ball
$$
\{z;|z|\leq f_j(x_j)\}\to R^n.
$$

Using the standard elliptic theory we know that
$$
v_j\to V, \; \;  C_{loc}^2(R^n)
$$
where $V$ is the standard bubble.

With the help of Harnack inequality (see (\ref{harnack}) in next
section), we can show that any blow up point is isolated blow up
point, which is also a critical point of $K$, and furthermore,
using Schoen's trick and $n-2$-flatness condition for $K$, the
isolated blow up point is in fact a simple blow up point. Then
with the help of Pohozaev formula, we show that any positive
solution to (\ref{eq1}) is uniformly bounded (see \cite{L14} or
\cite{CL8}).

 By now the argument for uniform bound of solutions
 is standard so we only sketch its proof. Assume that Theorem
\ref{uniformbound} is not true. Then using the result in previous
section, we know that there is a bounded, convex smooth domain
$\Omega\subset D_+$ such that Theorem \ref{uniformbound} is true
outside $\Omega$.
 Therefore, there exists a sequence of solutions $u_j$ such that
 $$
\max_{\Omega} u_j\to \infty, \; \; as \; \; \infty.
 $$

Define
$$
\mathbf{S}:=\{q\in \Omega; \exists x_j\in \Omega\;
s.t.\;u_j(x_j)\to \infty\; \; and\; x_j\to q\},
$$
which is the set of blow-up points of $\{u_j\}$. Using Schoen's
selection method, we can choose $x_j$ (where $x_j\to q$) as the
local maximum point of $u_j$ so that the assumption for the
Harnack inequality is true in an uniform ball $B_{3R}(q)$
(otherwise, we can use another blow up sequence of solutions).
Using the Harnack inequality in the ball $B_R(q)$ we know that the
energy of $u_j$ is uniformly bounded (see page 975 in \cite{CL8}),
hence, $\mathbf{S}$ is a finite set. Denote by
$$
\mathbf{S}=\{q_1, ..., q_m\}.
$$
We remark that using the Pohozaev identity we know that $q_k$'s
are the critical points of the function $K$. Anyway, we can choose
$\sigma<\frac{1}{4}\min_{k\not= l}|q_k-q_l|$ such that $u_j$ is
uniformly bounded in the domain
$\Omega_{\sigma}=\Omega_1-\cup_{k=1}^mB_{\sigma}(q_k)$ where
$\Omega_1$ is any bounded convex bounded domain containing some
$q_k$. Then using the n-2 flatness condition as in the same
argument in the proof of Theorem 1.9 in \cite{L19}, we can
complete the proof.

This shows that $u$ is uniformly bounded in $\Omega$. Hence $u$ is
uniformly bounded in the whole space $R^n$.

\section{ Harnack inequality}\label{sect2}

  We shall outline the blow up argument and the Pohozaev formula to get
  the uniform bound for solutions to (\ref{eq1}).

Let $\Omega\subset R^n$ be a bounded smooth domain. Assume that
$K$ is a positive bounded smooth function on $\Omega$. Let
$\mu=\mu_j$, $K(x)=K_j(x)$, and $u=u_j$ satisfy
\begin{equation}\label{eq2}
-\Delta u=\mu u+K(x)u^p, \; \; u>0 \; \;  in \; \;\Omega\subset
R^n,
\end{equation}

Let $x_0=0$ be a blow up point of $\{u_j\}$. The point $0$ is
called a simple blow up point if there are a constant $c$ and a
sequence of local maximum point $x_j$ of $u_j$ such that
\begin{equation}\label{F1}
0=\lim_{j\to\infty} x_j,
\end{equation} and
\begin{equation}\label{F2}
u_j(x_j+x)\leq cU_{\lambda_j}(x), \; \; for \; |x|\leq r_0,
\end{equation}
where $r_0>0$ is independent of $j$,
$\lambda_j=u_j(x_j)^{-\frac{2}{n-2}}$ tends to zero as $j\to
\infty$ and
\begin{equation}\label{F3}
U_{\lambda}(x)=(\frac{\lambda}{\lambda^2+|x|^2})^{\frac{n-2}{2}}
\end{equation}
Note that
\begin{equation}\label{F4}
\Delta U_{\lambda}+n(n-2)U_{\lambda}^{\frac{n+2}{n-2}}=0, \; \; in
\; \; R^n.
\end{equation}
It is easy to see that for the \emph{simple blow up} point $0$ we
have
\begin{equation}\label{F5}
u_j(x_j+x)\leq cu_j(x_j)^{-1}|x|^{2-n}, \; \; for \; |x|\leq r_0,
\end{equation}
Using $\lambda^2+|x|^2\geq 2|x|\lambda$, we have
\begin{equation}\label{F6}
U_{\lambda}(x)\leq (2|x|)^{\frac{2-n}{2}}.
\end{equation}
With these observations, we say that $x_0=0$ is a \emph{isolated
blow up} point of $\{u_j\}$ if
\begin{equation}\label{F7}
u_j(x_j+x)\leq c |x|^{\frac{2-n}{2}}, \; \; for \; |x|\leq r_0,
\end{equation}

Using a scaling, we know that the spherical Harnack inequality
holds for each $r\in(0,r_0)$. That is, there exists a unform
constant $C>0$ such that
$$
\max_{|x-x_j|=r}u_j(x)\leq C \min_{|x-x_j|=r}u_j(x).
$$

We shall show the following crucial estimate

\begin{Thm}\label{main2}
Assume that $K$ satisfies the condition as in Main result
\ref{main}, and there exists a constant $C_2>0$ such that
\begin{equation}\label{key} \max_{|x|\leq R}u\geq C_2\max_{|x|\leq
3R}u
\end{equation}
Then we have the following Harnack inequality that
\begin{equation}\label{harnack}
\max_{|x|\leq R}u\min_{|x|\leq 2R}u\leq CR^{2-n}
\end{equation}
for some uniform constant $C>0$.
\end{Thm}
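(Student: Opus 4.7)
The plan is to argue by contradiction using a blow-up analysis in the spirit of Schoen, as refined by Chen--Lin \cite{CL9} and Li--Zhang \cite{LZ16}. Using the natural scaling $v(x)=\alpha^{(n-2)/2}u(\alpha x)$, which transforms (\ref{eq2}) into the same type of equation with rescaled $\mu$ and $K$ still obeying (ii)--(iii), I would reduce to $R=1$. If (\ref{harnack}) fails there must exist a sequence of coefficients $(K_j,\mu_j)$ and solutions $u_j$ on $B_3$ satisfying the comparability hypothesis (\ref{key}) uniformly but with $M_j\,m_j\to\infty$, where $M_j=\max_{\overline{B_1}}u_j$ and $m_j=\min_{\overline{B_2}}u_j$. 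Interior elliptic theory away from blow-up points forces $M_j\to\infty$.

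Next I would perform the normalized blow-up at the maximizer $x_j\in\overline{B_1}$. Setting $\lambda_j:=u_j(x_j)^{-2/(n-2)}$ and $v_j(y):=u_j(x_j)^{-1}u_j(x_j+\lambda_j y)$, the function $v_j$ satisfies
$$-\Delta v_j=\mu_j\lambda_j^2 v_j+K_j(x_j+\lambda_j y)v_j^p$$
on balls of radius tending to infinity. Passing to a subsequence, the Caffarelli--Gidas--Spruck classification together with standard elliptic regularity yields $v_j\to U$ in $C^2_{\mathrm{loc}}(R^n)$, where $U$ is a standard bubble of the form (\ref{F3}). Iterating Schoen's selection procedure, and using (\ref{key}) to keep all blow-up points inside $\overline{B_1}$, I would extract a finite set $\{q_1,\dots,q_m\}\subset\overline{B_1}$ of isolated blow-up points, around each of which the spherical Harnack inequality noted after (\ref{F7}) is available; along the way the energy $\int|\nabla u_j|^2$ stays uniformly bounded outside small neighborhoods of this set.

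The central step, and in my view the main obstacle, is upgrading each $q_k$ from an isolated blow-up point to a simple blow-up point in the sense of (\ref{F2}). Here condition (iii) replaces the $n-2$-flatness assumption by $\Delta K\ge 0$ ($n=4,5,6$) or $\Delta K\ge D>0$ ($n\ge 7$), together with the $C^\beta$ control of (ii). Following the Chen--Lin and Li--Zhang framework, I would rule out secondary bubbles by combining the local Pohozaev identity on small annuli with the pointwise decay (\ref{F5})--(\ref{F7}) and the spherical Harnack inequality, so that (\ref{F2}) can be promoted from (\ref{F7}). This step is dimension-sensitive: for $n=4,5,6$ only weak subharmonicity is assumed, so one needs sharper control of the boundary terms in Pohozaev; for $n\ge 7$ the strict positivity of $\Delta K$ provides a definite sign on the leading interior term and a softer analysis is enough.

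With simplicity in hand I would conclude by inserting the expansion (\ref{F2}) into the Pohozaev identity on $B_\sigma(q_k)$ and letting $j\to\infty$. After dividing by $u_j(x_j)^2$, the boundary integrals converge to a strictly positive Green-function contribution, whereas condition (iii) forces the interior term involving $\int \Delta K\,u_j^{p+1}$ to carry the opposite sign or to be too small to balance it; the lower-order contribution $\mu_j\lambda_j^2\int v_j^2$ is negligible since $\lambda_j\to 0$, so the $\mu u$ term does not interfere with the bubble analysis. The resulting contradiction refutes the failure of (\ref{harnack}) and proves the theorem.
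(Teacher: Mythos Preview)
Your proposal takes a route that is not the paper's, and more importantly it contains a structural gap.

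The paper does \emph{not} prove Theorem~\ref{main2} by extracting a finite set of blow-up points, upgrading isolated to simple, and then reading off a Pohozaev contradiction. That is the scheme used \emph{after} the Harnack inequality is in hand, in order to pass from Harnack to the uniform $L^\infty$ bound (see Section~\ref{sect1}). For the Harnack inequality itself the paper argues by contradiction, blows up once at a carefully selected local maximum (Lemma~\ref{schoen}), obtains $v_j\to U$ in $C^2_{\mathrm{loc}}$, and then applies the \emph{moving sphere method} to $w_\lambda=v_j-v_j^\lambda$, where $v_j^\lambda$ is the Kelvin transform. The failure of (\ref{harnack}) is encoded as $\min_{|y|=T_j}v_j(y)\,|y|^{n-2}\to\infty$, which keeps $w_\lambda+h_\lambda>0$ on the outer boundary of $\Sigma_\lambda$. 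One then builds auxiliary functions $h_1,h_2,h_3$ (Section~\ref{sect4}, via Propositions~\ref{Fund1}--\ref{Fund2}) so that $L_\lambda(v_j)(w_\lambda+h)\le 0$ on the relevant set, and pushes the sphere past $\lambda=1$; since $U-U^\lambda<0$ for $\lambda>1$, this is a contradiction. The hypothesis $\Delta K\ge 0$ enters precisely here, making the term $Q_4\le 0$; it is not used as a sign in a Pohozaev balance.

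The gap in your outline is circularity. The finiteness of the blow-up set, the uniform energy bound ``along the way'', and even the isolated blow-up estimate (\ref{F7}) around each $q_k$ are, in this framework, \emph{consequences} of the Harnack inequality (\ref{harnack}) (see the paragraph after Theorem~\ref{main2} and Section~\ref{sect1}), not independent inputs. Schoen's selection by itself produces one good center with a locally bounded rescaling; it does not deliver a finite list of isolated points with controlled energy unless you already have a Harnack-type product bound. So the step ``extract a finite set of isolated blow-up points \ldots\ around each of which the spherical Harnack inequality is available'' is exactly what Theorem~\ref{main2} is meant to supply. In addition, under condition~(iii) the sign of $\Delta K$ is nonnegative, which is the \emph{wrong} sign for the usual Pohozaev contradiction at a simple blow-up point; this is why the paper replaces that mechanism by the moving-sphere construction, where $\Delta K\ge 0$ is favorable (it makes $Q_4\le 0$). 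Your final paragraph would need a different mechanism than a Pohozaev sign clash to close.
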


The estimate above is important since it implies that any blow up
point is isolated blow up point as wanted. It also follows from it
that the uniform energy finite property for the solution in the
ball $B_R$. To apply our Harnack inequality (\ref{harnack}) near
an isolated critical point of $K$, which is assumed to be a blow
up point of a sequence of solutions $\{u_j\}$, we need to verify
the assumption (\ref{key}). Actually, this can be done by move the
center of local maximum point of $u_j$. In fact, assume that
$u_j(y_j)\to \infty$ as $y_j\to 0$. We find a ball $B_{8R}(0)$.
Take $u_j(z_j)=\max_{\bar{B}_{8R}(0)}u_j(x)$. Then $z_j\to 0$. We
define $\bar{u}_j(x)=u_j(z_j+x)$ and then $\bar{u}_j$ satisfies
our assumption (\ref{key}) in the ball $B_{2R}$. We can apply the
Harnack inequality to $\bar{u}_j$.

Assume (\ref{harnack}) is not true. Then we have $R_j\to 0$ and
solutions $u_j$ corresponding to the data $(\mu_j,K_j(x))$ such
that
$$
\max_{|x|\leq R_j}u_j\min_{|x|\geq 2R_j}u_j\geq jR_j^{2-n}.
$$
Let $y_j: |y_j|\leq R_j$ be such that
$$
u_j(y_j)=\max_{|x|\leq R_j}u_j.
$$
Assume that $y=\lim y_j$. Let $M_j=u_j(y_j)$ and
$\tau_j=M_j^{-\frac{n-2}{2}}$. Then it is easy to see that $M_j\to
\infty$. Let
$$
v_j(x)=v_{y_j}(x)
$$
be the blow up sequence for $(u_j)$. Then we have a subsequence,
still denoted by $v_j$, which is convergent to the standard bubble
$U(x)$ in nay large ball in $R^n$ and for any $\epsilon>0$, there
exists a constant $\eta=\eta(\epsilon)$ such that on an unform
size of $r$ it holds
$$ \min_{|x|\leq r} v_j\leq (1+\epsilon)U(r).
$$
(This can be shown by arguing by contradiction again).

Following the moving sphere method used by Li-Zhang \cite{LZ16}
(see also Chen -Lin \cite{CL8}), we can claim that $y$ is the
critical point of $K(x)=\lim_jK_j(x)$.

Now we let $\beta_j=|\nabla K_j(y_j)|$. Then we have $\beta_j\to
0$. Again following the moving sphere method used by Li-Zhang, we
can show that there is a positive constant $C>0$ such that
$$
\beta_j^{1/(\theta-1)}\leq C\tau_j.
$$

With all these preparation, we can further show as in Li-Zhang
\cite{LZ16} (see also Chen-lin \cite{CL9}) that the Harnack
inequality (\ref{harnack}) is true.

It is easy to see that the Harnack inequality (\ref{harnack})
implies that there is a uniform constant $C(R)>0$ such that
$$
|\nabla u|_{L^2(B_R)}+|u|_{L^{2n/(n-2)}(B_R)}\leq C.
$$

The importance of the Harnack estimate is that it implies that the
blow up points for $u_j$ are isolated and finite in the ball
$B_R$.

Using the flatness condition for $K$, we can follow Schoen's
localization trick (using the Pohozaev identity) to show that $u$
is uniformly bounded in $B_R$.

\section{On the proof of Harnack inequality}\label{sect3}

We shall argue by contradiction. Without loss of generality, we
assume that $R=1$ and set $B=B_1$. Then we have a sequence $(u_j)$
satisfying (\ref{eq1}) on $B_3$ with $K$ and $\mu$ replaced by
$K_j$ and $\mu_j\in [0,L]$ such that
\begin{equation}\label{blowup}
\max_{\bar{B}_1}u_j\min_{\bar{B}_2} u_j\geq j.
\end{equation}

Let $y_j$ be the maximum point of $u_j$ on $\bar{B}_1$. Consider
$u(x)=u_j(\frac{1}{2}x+y_j)$ on the ball $B$ in Lemma \ref{schoen}
and $a=(n-2)/2$. Then we find a maximum point $x_j$ of the
function
$$
u(x)(1-|x|)^a
$$
such that for $\sigma_j=\frac{1}{2}-|x_j-y_j|\leq 1/2$,
\begin{equation}\label{S1}
u_j(x_j)\geq 2^{-a}\max_{\bar{B}(x_j,\sigma_j/2)}u_j
\end{equation}
and
\begin{equation}\label{S2}
\sigma_j^au_j(x_j)\geq 2^{a}u_j(y_j).
\end{equation}
The last inequality implies that $u_j(x_j)\geq u_j(y_j)$. Let
$$
\lambda_j=\frac{1}{2}u_j(x_j)^{1/a}\sigma_j
$$
and $$ M_j=u_j(x_j).
$$

Then by (\ref{blowup}) and (\ref{S2}) we have
$$
4\lambda_j\geq u_j(y_j)^a\geq (u_j(y_j)\min_{\bar{B}_2}
u_j)^{1/2a}\geq j^{1/2a}\to \infty.
$$

We let
$$
v_j(y)=M_j^{-1}u_j(x_j+M_j^{-1/a}y), \; \; |y|\leq M_j^{1/a}\to
\infty.
$$
By direct computation, $u_j$ satisfies that
$$
-\Delta u=\mu_jM_j^{-2/a} u+K_j(y)u^p, \; \;  \;|y|\leq M_j^{1/a}.
$$
Note that $v_j(0)=1$ and by (\ref{S1},)
$$
\max_{|y|\leq \lambda_j}v_j\leq 2^av_j(0)=2^a.
$$

Applying the standard elliptic theory, we may assume that
$$
v_j\to v; \; \; in \; \; C^2_{loc}(R^n)
$$
where $v$ satisfies
$$
-\Delta v=\lim_jK_j(x_j)v^p, \; \; in \; \; R^n.
$$
Using the classification theorem of Caffarelli et al. \cite{CGS4},
we know that $v$ is radially symmetric about some point $x_0$,
which is the only maximum point of $v$, and $v(y)$ decays like
$|y|^{2-n}$ near $\infty$. With the help of the data of $v$, we
may assume that $y_j$ is the local maximum point of $v_j$ such
that
$$
x_0=\lim_j (y_j-x_j)M_j^{1/a}.
$$
Then we can re-define $v_j$ at the center $y_j$. This is the
localization blow up trick of R.Schoen.

 Again, without
loss of generality, we assume that $\lim_jK_j(x_j)=n(n-2)$. Then
we have
$$
U(y)= (1+|y|^2)^{-a}.
$$
Recall that, for the Kelvin transformation
$$
y\to y^{\lambda}=\lambda^2 y/|y|^2,
$$
we have
$$
u^{\lambda}(y)=(\frac{\lambda}{|y|})^{n-2}u(y^{\lambda}),
$$
we have
\begin{equation}\label{kelvin}
\Delta u^{\lambda}(y)=(\frac{\lambda}{|y|})^{n+2}\Delta
u(y^{\lambda}).
\end{equation}
Note that by direct computation, we have
\begin{equation}\label{bubble}
U(r)-U^{\lambda}(r)=(1-\lambda)(1-\frac{\lambda}{r})0(r^{2-n}), \;
\; r=|y|\geq \lambda.
\end{equation}

Note that on one hand, $B(y_j,\frac{1}{2})\subset B_2$ and
$$
-\Delta u_j\geq 0,
$$
and by the maximum principle, we have $ \min_{\bar{B}_r}v_j $ is
monotone non-increasing in $r$.

On the other hand, by (\ref{blowup}),
 we have
$$
\min_{\bar{B}(y_j,\frac{1}{2})}u_j\geq \min_{\bar{B}_2} u_j.
$$
Then (\ref{blowup}) gives us that $$
\min_{2|y|=M_j^{1/a}}v_j(y)|y|^{n-2}\to \infty.
$$
Hence, we can choose $\epsilon_j\to 0$ such that
$$
\min_{|y|=\epsilon_jM_j^{1/a}}v_j(y)|y|^{n-2}\to \infty,
$$
and
$$
T_j:=\epsilon_jM_j^{1/a} \to\infty.
$$

We now use the Kelvin transformation
$$
y\to y^{\lambda}=\lambda^2 y/|y|^2,
$$
where $\lambda \in [.5,2]$, to the function $v_j$.

 Let
$$
v_j^{\lambda}(y):=(\frac{\lambda}{|y|})^{n-2}v_j(y^{\lambda}).
$$
Using the formula (\ref{kelvin}), we compute that $v_j^{\lambda}$
satisfies
$$
-\Delta v=\frac{\mu_j M_j^{-2/a}\lambda^{4}}{|y|^{4}}
v+K_j(x_j+M_j^{-1/a}y^{\lambda}) v^p, \; \; |y|>0(M_j^{-1/a}).)
$$

Let $w_{\lambda}=v_j-v_j^{\lambda}$. Then we have
\begin{equation}\label{comp}
L_{\lambda}(v_j) w_{\lambda}:=\Delta w_{\lambda}+\mu_j
M_j^{-2/a}(1-\frac{\lambda^{4}}{|y|^{4}})w_{\lambda}+
K_j(y_j+M_j^{-1/a}y)P(v_j)^{2/a}w_{\lambda}=Q_{\lambda}, \; \;in
\;  \Sigma_{\lambda},
\end{equation}
where $\Sigma_{\lambda}:=B(0,T_j)-\bar{B}_{\lambda}$,
$$
P(v_j)^{2/a}:=(v_j^p-(v_j^{\lambda})^p)/w_{\lambda},
$$
and
$$
Q_{\lambda}=-(K_j(x_j+M_j^{-1/a}y)-K_j(x_j+M_j^{-1/a}y^{\lambda})w^p.
$$
Note that in $\Sigma_{\lambda}$, we have
\begin{equation}\label{Qe}
Q_{\lambda}=0(M_j^{-1/a}r^{-1-n}).
\end{equation}
This fact will be used later.

 We want to use two
ways to get a contradiction. One is to show that for any
$\epsilon>0$, there exists a $\delta_0=\delta_0(\epsilon)$ such
that the inequality
\begin{equation}\label{comparison1}
\min_{|y|\leq r}v_j(y)\leq (1+\epsilon)U(r)
\end{equation}
holds for $0\leq r\leq \delta_0T_j$. Once (\ref{comparison1}) is
established, we get a contradiction by our assumption
(\ref{blowup}). In fact,
$$
\min_{|y|\leq T_j}v_j(y)\geq M_j^{-1}\min_{|y|\leq
\epsilon_j}u_j(y)\geq \frac{j}{M_j^2\epsilon_j^{n-2}}\geq
jT_j^{2-n},
$$
and
$$
\geq \frac{j}{\delta_0^{n-2}} U(\delta_0T_j),
$$
which is absurd to (\ref{comparison1}). We shall see  that by
using the companion function, we can obtain the following result.

\begin{Pro}\label{n=3}
For any $\epsilon>0$, there exists a $\delta_0=\delta_0(\epsilon)$
such that the inequality
\begin{equation}\label{comparison2}
\min_{|y|= r}v_j(y)\leq (1+\epsilon)r^{2-n}
\end{equation}
holds for $r\in (\lambda,\delta_0M_j^{2/a^2})$.
\end{Pro}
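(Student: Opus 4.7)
The plan is to complete the moving-sphere procedure initiated in the previous section. Using the Kelvin transform $v_j^\lambda$ and the linearized equation (\ref{comp}), the goal is to push the comparison $w_\lambda := v_j - v_j^\lambda \geq 0$ on $\Sigma_\lambda = B(0,T_j)\setminus\bar B_\lambda$ up to a critical value $\bar\lambda_j$ with $\bar\lambda_j\to 1$, and then to convert this into the asymptotic upper bound (\ref{comparison2}).

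First I would verify the boundary conditions: on $\partial B_\lambda$, $w_\lambda \equiv 0$ by the Kelvin identity; on $\partial B_{T_j}$, the construction of $T_j$ gives $\min_{|y|=T_j} v_j(y)\,|y|^{n-2}\to\infty$, whereas $v_j^\lambda(y) = (\lambda/|y|)^{n-2}v_j(y^\lambda)$ satisfies $|y^\lambda|=\lambda^2/T_j\to 0$ and $v_j(y^\lambda)\to U(0)=1$, so $v_j^\lambda(y)\,|y|^{n-2}$ stays bounded. Hence $w_\lambda>0$ on $\partial B_{T_j}$ for large $j$. Starting at $\lambda=1/2$, the $C^2_{\mathrm{loc}}$-convergence $v_j\to U$ together with the strict inequality from (\ref{bubble}) give $w_{1/2}>0$ in $\Sigma_{1/2}$. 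I then define $\bar\lambda_j$ as the supremum of $\lambda$ for which $w_\mu\geq 0$ on $\Sigma_\mu$ for all $\mu\in[1/2,\lambda]$.

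The central step is the continuation up to $\bar\lambda_j$ and the assertion that $\bar\lambda_j\to 1$. Because the source term $Q_\lambda = O(M_j^{-1/a}r^{-1-n})$ in (\ref{comp}) is not zero, the maximum principle does not apply directly to $w_\lambda$. Following the strategy of Chen--Lin \cite{CL9} and Li--Zhang \cite{LZ16}, I would introduce a companion function $\eta_\lambda\geq 0$ on $\Sigma_\lambda$ satisfying $L_\lambda \eta_\lambda \leq -|Q_\lambda|$ with $\sup_{\Sigma_\lambda}\eta_\lambda = o(1)$ as $j\to\infty$; the maximum principle applied to $w_\lambda+\eta_\lambda$ then closes the standard Hopf-lemma continuation across the $\lambda$-interval. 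Because the bubble $U$ is a fixed point of the Kelvin transform precisely at $\lambda=1$, the local convergence $v_j\to U$ forces $\bar\lambda_j\to 1$.

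The main obstacle is the construction and uniform control of the companion function $\eta_\lambda$ over the expanding annulus $\Sigma_\lambda$; this relies on the pointwise decay (\ref{F5})--(\ref{F7}) of $v_j$, the smallness $\beta_j^{1/(\theta-1)}\leq C\tau_j$ of $\nabla K_j(y_j)$ established just above the statement of the proposition, and weighted estimates for the linear operator $L_\lambda$. Once $w_{\bar\lambda_j}\geq 0$ on $\Sigma_{\bar\lambda_j}$ with $\bar\lambda_j\to 1$ is in hand, I would extract (\ref{comparison2}) by running the symmetric moving-sphere argument for $\lambda$ slightly larger than $1$ to produce $v_j\leq v_j^\lambda$ on the corresponding annulus. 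Evaluating this at a point $y^*$ with $|y^*|=r$ at which the spherical minimum of $v_j$ is attained, and using $v_j(y^{*\lambda})\to v_j(0)=1$ since $|y^{*\lambda}|=\lambda^2/r$ is small when $r$ is large, yields $v_j(y^*)\leq (\lambda/r)^{n-2}(1+o(1))\leq (1+\epsilon)r^{2-n}$ for $\lambda$ close to $1$. The permissible range $r\leq \delta_0 M_j^{2/a^2}$ arises from the requirement that the reflected point $y^{*\lambda}$ remain inside the region where $v_j$ is $C^2$-close to the standard bubble $U$.
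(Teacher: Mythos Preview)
Your outline contains a genuine gap in the final step. Establishing $w_\lambda\geq 0$ on $\Sigma_\lambda=B(0,T_j)\setminus\bar B_\lambda$ for $\lambda\uparrow\bar\lambda_j\to 1$ yields only a \emph{lower} bound: for $|y|=r$ it says $v_j(y)\geq(\lambda/r)^{n-2}v_j(y^\lambda)\approx r^{2-n}$, which is the wrong direction for (\ref{comparison2}). Your proposed remedy, a ``symmetric moving-sphere argument for $\lambda>1$'' producing $v_j\leq v_j^\lambda$ on the annulus, cannot be carried out in this domain: you yourself verified that on the outer boundary $|y|=T_j$ one has $v_j(y)|y|^{n-2}\to\infty$ while $v_j^\lambda(y)|y|^{n-2}=\lambda^{n-2}v_j(y^\lambda)$ stays bounded, so $w_\lambda>0$ there for every $\lambda\in[1/2,2]$. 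The maximum principle therefore never delivers $w_\lambda\leq 0$, and your extraction of the upper bound at $y^*$ is unjustified.

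The paper closes this gap by arguing \emph{by contradiction}. Assuming (\ref{comparison2}) fails produces $\epsilon_0>0$, $\delta_j\to 0$ and radii $r_j\leq\delta_jM_j^{2/a^2}$ (necessarily $r_j\to\infty$) with $\min_{|y|=r_j}v_j>(1+\epsilon_0)r_j^{2-n}$. This hypothesis is precisely what furnishes the outer boundary condition: on $|y|=r_j$ one gets $w_\lambda+h>0$ for all $\lambda\in[1-\epsilon_1,1+\epsilon_1]$, where $h=QM_j^{-1/a}g$ with $g$ the explicit radial function satisfying $\Delta g=-r^{-\alpha}$, $g(\lambda)=g'(\lambda)=0$, $-C\leq g\leq 0$. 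Since $L_\lambda(w_\lambda+h)\leq 0$ from $Q_\lambda=O(M_j^{-1/a}r^{-1-n})$, the maximum principle now runs on $B(0,r_j)\setminus\bar B_\lambda$ and pushes $\lambda$ strictly past $1$; but for any fixed $\lambda>1$, $w_\lambda+h\to U-U^\lambda<0$ on compact sets, a contradiction. Note also that neither the estimate $\beta_j^{1/(\theta-1)}\leq C\tau_j$ nor the decay (\ref{F5})--(\ref{F7}) is available at this stage (indeed the Harnack inequality you are proving is what ultimately yields isolated blow-up), so you should not invoke them here.
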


 For dimension $n\geq 5$ we have

\begin{Pro}\label{n>4}
 Assume that $n\geq 5$ and assume that
 $$
 \max_{\bar{B}}u\geq C \max_{\bar{B}_2}u
 $$
 for some uniform constant $C>0$.
For any $\epsilon>0$, there exists a $\delta_0=\delta_0(\epsilon)$
such that the inequality
\begin{equation}\label{comparison2}
\min_{|y|\leq r}v_j(y)\leq (1+\epsilon)U(r)
\end{equation}
holds for $r\leq \delta_0M_j^{2/a^2})$.
\end{Pro}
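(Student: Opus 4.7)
The plan is the moving-sphere method via Kelvin transform, following Chen-Lin \cite{CL9} and Li-Zhang \cite{LZ16}. I argue by contradiction, supposing that for some $\epsilon_0 > 0$ and some $r_j \le \delta_0 M_j^{2/a^2}$ one has $\min_{|y|\le r_j} v_j(y) > (1+\epsilon_0) U(r_j)$. The strategy is to push the comparison parameter $\lambda$ in the Kelvin inequality as far as possible, exploiting the fact that both the drift $\mu_j M_j^{-2/a}$ and the error $Q_\lambda = O(M_j^{-1/a} r^{-n-1})$ in the linearized equation (\ref{comp}) become negligible as $j \to \infty$.

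The central step is to introduce
$$\bar{\lambda}_j := \sup\bigl\{ \lambda \in [1/2,\,T_j/2] : w_\mu = v_j - v_j^\mu \text{ has constant sign on } \Sigma_\mu \text{ for every } \mu \in [1/2, \lambda] \bigr\},$$
and to show that $\bar{\lambda}_j$ may be pushed to any prescribed multiple of $M_j^{2/a^2}$. The bubble identity (\ref{bubble}) together with the $C^2_{loc}$ convergence $v_j \to U$ initializes the process near $\lambda = 1$; the extension then proceeds by continuity, a strong-maximum-principle/Hopf argument on the inner sphere $|y| = \lambda$, and a one-sided outer boundary control at $|y| \sim T_j$ coming from the asymptotic $v_j^\lambda(y) \sim (\lambda/|y|)^{n-2} v_j(0)$.

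The principal obstacle is that the zeroth-order coefficient $K_j(\cdot) P(v_j)^{2/a}$ in $L_\lambda$ from (\ref{comp}) is nonnegative, which blocks a direct use of the maximum principle. I would circumvent this with a narrow-annulus barrier of the form $\Phi(y) = c_1 |y|^{2-n} - c_2 |y|^{-\alpha}$ for $\alpha \in (n-2, n)$, together with an Alexandroff--Bakelman--Pucci type estimate whose constant is controlled by a weighted norm of $P(v_j)^{2/a}$ on the relevant annulus. The dimensional hypothesis $n \ge 5$ enters here through the decay of $U^{4/(n-2)}$ in that weighted norm and through the admissible range of $\alpha$ in the barrier construction; this is precisely why the low-dimensional cases $n = 3, 4$ require the separate companion-function device of Proposition \ref{n=3}. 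The supplementary assumption $\max_{\bar B} u \ge C \max_{\bar B_2} u$ serves to rule out degenerate concentration and to guarantee that the limit profile is genuinely a nondegenerate multiple of $U$.

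Once $\bar{\lambda}_j \ge \delta_0 M_j^{2/a^2}$ has been established, the Kelvin relation $v_j^{\bar{\lambda}_j}(y) = (\bar{\lambda}_j/|y|)^{n-2} v_j(y^{\bar{\lambda}_j})$, applied at $|y| = r$ so that $y^{\bar{\lambda}_j}$ lies near the origin where $v_j$ is $C^2$-close to $U \approx 1$, and then combined with (\ref{bubble}), yields the claimed bound $\min_{|y| \le r} v_j(y) \le (1+\epsilon) U(r)$ throughout $r \le \delta_0 M_j^{2/a^2}$. Inserting this into the chain of inequalities following (\ref{comparison1}) produces the contradiction with (\ref{blowup}) and completes the argument.
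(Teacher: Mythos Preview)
Your proposal diverges from the paper's argument in two essential ways, and both are genuine gaps.

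First, you set up the moving-sphere scheme incorrectly. In the paper (and in Chen--Lin, Li--Zhang) the sphere radius $\lambda$ is kept in a \emph{fixed compact interval near $1$}, say $\lambda\in[1/2,2]$; the large parameter is the outer radius $r_j$ (or $T_j$) of the annulus $\Sigma_\lambda=B_{r_j}\setminus\bar B_\lambda$. The contradiction comes from being able to push $\lambda$ just past $1$ while keeping $w_\lambda+h_\lambda>0$ on $\Sigma_\lambda$, which is incompatible with $v_j\to U$ and the elementary fact $U-U^\lambda<0$ for $\lambda>1$ (see (\ref{bubble})). You instead try to push $\bar\lambda_j$ itself up to order $M_j^{2/a^2}$. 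Apart from the fact that $\bar\lambda_j\le T_j/2$ by your own definition, the Kelvin step you describe at the end is internally inconsistent: if $|y|=r\le\delta_0 M_j^{2/a^2}$ and $\bar\lambda_j\ge\delta_0 M_j^{2/a^2}$, then $|y^{\bar\lambda_j}|=\bar\lambda_j^2/r\ge\delta_0 M_j^{2/a^2}$, which is nowhere near the origin.

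Second, and more seriously, you rely on the crude estimate $Q_\lambda=O(M_j^{-1/a}r^{-n-1})$ from (\ref{Qe}) and hope that a generic ABP/narrow-barrier argument handles the rest. That estimate alone only yields the weak range of Proposition~\ref{n=3}; it is \emph{not} enough to reach the range in the present proposition. The paper's mechanism is precisely to remove the obstruction by hand: one Taylor-expands $Q_\lambda=Q_1+Q_2+Q_3+Q_4+Q_5+O(M_j^{-4/a}r^{2-n})$ in spherical harmonics, solves the linear ODEs of Proposition~\ref{Fund2} to produce companion functions $h_1,h_2,h_3$ with $L_\lambda(U)h_i=Q_i$ and $|h_i|=o(1)r^{2-n}$ (see (\ref{31}), (\ref{34}), (\ref{35})), and then works with $W=w_\lambda-\sum h_i$. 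Only after this explicit cancellation does one obtain $L_\lambda(v_j)(W+\tilde h)\le 0$ on $\Omega(\lambda)=\{v_j\le 2v_j^\lambda\}$, where the sign condition $Q_4\le 0$ (from subharmonicity of $K$) is used. The maximum principle then runs on $\Omega(\lambda)$, not via ABP. Your barrier $\Phi(y)=c_1|y|^{2-n}-c_2|y|^{-\alpha}$ does not substitute for this cancellation: the issue is the \emph{size} of $Q_\lambda$, not merely the sign of the zeroth-order coefficient.

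Finally, the role of $n\ge 5$ and of the hypothesis $\max_{\bar B}u\ge C\max_{\bar B_2}u$ is not what you describe. In the paper the dimension governs how many Taylor terms of $K$ must be subtracted ($h_1$ for $n=4$, $h_1,h_2$ for $n=5$, and so on), while the max hypothesis is used (via Proposition~\ref{har2} and its refinements) to upgrade the sphere-wise minimum bound to a pointwise bound $v_j\le CU$ through the spherical Harnack inequality; it is not about ruling out degenerate profiles.
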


The case when $n=4$ will be treated separately. With these
preparation, we can proceed the proof by the other one method,
which is the following. Note that from (\ref{kelvin}), we have
$U(r)-U^{\lambda}(r)>0$ for $r>\lambda $ and $\lambda<1$, and
$U(r)-U^{\lambda}(r)<0$ for $r>\lambda $ and $\lambda>1$. This is
a very important fact for us to get a contradiction by moving
sphere method. In fact, we shall show that the approximation
$v_j-v^{\lambda}_j>0$ (of $U(r)-U^{\lambda}(r)$) on
$\Sigma_{\lambda}$ for some $\lambda>1$ (which is very near to $1$
) to get a contradiction. Life will be too simple if this can be
easily obtained. However, to obtain this, we shall construct a
companion function $h_{\lambda}$ such that
\begin{equation}\label{H1}
h_{\lambda}=0. \; \; on \; \;
\partial B_{\lambda}
\end{equation}
 and
\begin{equation}\label{H2}
h_{\lambda}(x)=\circ(1) r^{2-n}, \; \; on \; \; \Sigma_{\lambda}.
\end{equation}
In other word, $h_{\lambda}$ is a small perturbation.

Firstly, we consider the boundary condition. By (\ref{H2}) we have
that
\begin{equation}\label{B1}
w_{\lambda}+h_{\lambda}>0, \; \; on \; \;
\partial\Sigma_{\lambda}-\partial B_{\lambda}
\end{equation}
We want to show
$$
w_{\lambda}+h_{\lambda}>0, \; \; on \; \; \Sigma_{\lambda}.
$$
 Hence, to use the maximum principle trick, we need
to know that
$$
L_{\lambda}(w_{\lambda}+h_{\lambda})\leq 0, \; \; in \; \;
\Sigma_{\lambda}
$$
This requires a subtle construction of $h_{\lambda}$. One remark
is that we actually only require
$$
L_{\lambda}(w_{\lambda}+h_{\lambda})\leq 0
$$
on the part where $w_{\lambda}+h_{\lambda}<0$. Note that on the
region where $v_j>2v^{\lambda}_j$, we clearly have
$w_{\lambda}+h_{\lambda}>0$. Let
$$
\Omega(\lambda)=\{y;v_j(y)\leq 2v^{\lambda}_j(y)\}.
$$
We want to construct $h_{\lambda}$ such that
\begin{equation}\label{St2}
L_{\lambda}h_{\lambda}+Q_{\lambda}\leq 0, \; \; in \; \;
\Omega(\lambda).
\end{equation}
This will be done in the next section.

Secondly, we consider the initial step to use the moving sphere
method. From (\ref{H1}-\ref{H2}) and because of $w_{\lambda_0}>>1$
for $\lambda_0<1$ near to $1$, we know that
\begin{equation}\label{St1}
w_{\lambda_0}+h_{\lambda_0}>0, \; \; in \; \; \Sigma_{\lambda_0}
\end{equation}
for $\lambda_0<1$ but near to $1$. So the first step for us to use
moving sphere method is done.

Once we start the moving sphere method, we can not stop until
(\ref{St2}) can not be held. Anyway, we have $h_{\lambda}$ such
that (\ref{S2}) is true for all $\lambda$ near to $1$. Then we can
use the maximum principle to get that
$$
w_{\lambda_1}+h_{\lambda_1}>0
$$
for some $\lambda_1>1$, which gives us a contradiction as we
wanted. Therefore, (\ref{harnack}) is true for general case.

To end this section, let's prove the Harnack inequality
(\ref{harnack}) in dimension three.

Note that the estimate (\ref{comparison2}) gives a contradiction
to our assumption (\ref{blowup}) when $n=3$. However, it is weaker
when $n\geq 4$. As a warm-up, we now prove Proposition \ref{n=3}.

\begin{proof} Assume that (\ref{comparison2}) is not true. Then
there exist a constant $\epsilon_0>0$ and a sequence $\delta_j\to
0$ of positive numbers such that for some $r_j\leq
\delta_jM_j^2/a^2$,
\begin{equation}\label{cont1}
\min_{|y|= r_j}v_j(y)>(1+\epsilon_0)r_j^{2-n}.
\end{equation}
We shall use the moving method in the domain
$\Sigma_{\lambda}=B(0,r_j)-\bar{B}_{\lambda}$. Since $v_j\to U$ in
$C^2_{loc}(R^n)$, we must have $r_j\to \infty$. Let $\alpha\in
(2,n)$ and define, for $r=|y|\geq \lambda$,
$$
g(r)=g_{\alpha}(r)=\frac{1}{(2-\alpha)(n-\alpha)}[\lambda^{2-\alpha}-r^{2\alpha}]
-\frac{\lambda^{n-\alpha}}{(n-2)(n-\alpha)}[r^{2-n}-\lambda^{2-n}].
$$
Note that, for $r\geq \lambda$,
$$
g(\lambda)=g'(\lambda)=0,
$$
and
$$
-C(n,\alpha)\leq g(r)\leq 0.
$$
The function $g$ satisfies
$$
\Delta g=-r^{-\alpha}, \; \; in \; |y|\geq \lambda.
$$
Using (\ref{Qe}), for large constant $Q>0$ and for
$$
h=QM_j^{-1/a}g=\circ(1)r^{2-n}<0, \; \; in \; \; \Sigma_{\lambda},
$$
and
$$
L_{\lambda}(v_j)(w_{\lambda}+h)=0(M_j^{-1/a}r^{-1-n})+\Delta
h+P(v_j)^{2/a}h,
$$
which is
$$
\leq 0(M_j^{-1/a}r^{-1-n})+\Delta h<0, \; \; in \; \;
\Sigma_{\lambda}.
$$
Using the fact that for $|y|=r_j$ and for $\lambda\in
[1-\epsilon_1,1+\epsilon_1]$ with some small $\epsilon_1$,
$w_{\lambda}+h>0$, we can use the maximum principle to get that
$$
w_{\lambda}+h>0, \; \; in \; \; \Sigma_{\lambda}.
$$
This is contradiction to the fact that
$$w_{\lambda}+h\approx v_j-v^{\lambda}\to U(r)-U^{\lambda}(r)<0, \; \; for \; \; r>\lambda>1
$$
in $C^2_{loc}(R^n)$. Hence (\ref{comparison2}) is true.
\end{proof}

As we pointed out before, in dimension three, Proposition
\ref{n=3} implies the Harnack estimate, which was firstly obtained
by Schoen when $\mu=0$. The argument above is a good lesson for
other dimensions. Anyway, we have
\begin{Thm}\label{harnack3} For $n=3$ and any positive $C^1(B_3)$ function $K$,
the Harnack inequality (\ref{harnack}) is true for solutions to
(\ref{eq1}).
\end{Thm}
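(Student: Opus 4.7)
The plan is to argue by contradiction, invoking the blow-up reduction from Section \ref{sect2} together with the already-proved Proposition \ref{n=3}. Suppose (\ref{harnack}) fails in dimension three. Exactly as in Section \ref{sect2}, one extracts a sequence $(u_j)$ solving (\ref{eq1}) on $B_3$ with $\max_{\bar B_1} u_j \cdot \min_{\bar B_2} u_j \geq j$, applies Schoen's local-maximum selection to pick $x_j$, and rescales by $M_j = u_j(x_j) \to \infty$ to obtain $v_j(y) = M_j^{-1} u_j(x_j + M_j^{-1/a} y)$, which converges in $C^2_{\mathrm{loc}}(R^n)$ to the standard bubble $U(y) = (1+|y|^2)^{-a}$; here $a = (n-2)/2 = 1/2$.

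Next, I would convert (\ref{blowup}) into a pointwise lower bound on $v_j$ far from the origin. The maximum-principle argument already carried out in Section \ref{sect2} produces a scale $T_j := \epsilon_j M_j^{1/a}$, with $\epsilon_j \to 0$ chosen slowly, such that $T_j \to \infty$ and
$$\min_{|y|=T_j} v_j(y)\, T_j^{n-2} \longrightarrow \infty \quad \text{as } j \to \infty.$$

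Finally, I would invoke Proposition \ref{n=3}: for every $\epsilon > 0$ there exists $\delta_0(\epsilon) > 0$ such that $\min_{|y|=r} v_j(y) \leq (1+\epsilon) r^{2-n}$ for every $r \in (\lambda, \delta_0 M_j^{2/a^2})$. In dimension three $1/a = 2$ and $2/a^2 = 8$, so the scale $T_j = \epsilon_j M_j^2$ lies comfortably below $\delta_0 M_j^8$ once $j$ is large. Applying the proposition at $r = T_j$ gives
$$\min_{|y|=T_j} v_j(y)\, T_j^{n-2} \leq 1 + \epsilon,$$
which directly contradicts the divergence displayed above. Hence (\ref{harnack}) must hold.

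The only real obstacle is verifying that the admissible window in Proposition \ref{n=3} is wide enough to cover the scale $T_j$ at which the lower bound forced by (\ref{blowup}) diverges. In $n=3$ the gap between $T_j \sim \epsilon_j M_j^2$ and $\delta_0 M_j^8$ is enormous, which explains why only $C^1$-regularity on a positive $K$ is required and why no flatness or subharmonicity hypothesis on $K$ enters. This is also why the same one-step argument breaks down for $n \geq 4$, where the analogous window shrinks relative to the critical scale and one must instead invoke the stronger companion-function construction of Proposition \ref{n>4} together with the auxiliary assumptions on $\Delta K$.
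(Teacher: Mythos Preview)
Your argument is correct and follows exactly the route the paper indicates: the paper's own proof consists of the single remark that ``in dimension three, Proposition~\ref{n=3} implies the Harnack estimate,'' and you have spelled out precisely how---by checking that in $n=3$ the scale $T_j=\epsilon_j M_j^{2}$ falls within the admissible window $(\lambda,\delta_0 M_j^{8})$ of Proposition~\ref{n=3}, so that the upper bound $(1+\epsilon)T_j^{2-n}$ contradicts the divergence $\min_{|y|=T_j}v_j\,T_j^{n-2}\to\infty$ forced by (\ref{blowup}). Your explanation of why this one-step argument fails for $n\ge4$ (the window from Proposition~\ref{n=3} no longer covers $T_j$) is also the paper's rationale for introducing the companion-function machinery in higher dimensions.
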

We should say here that with suitable assumption on the set of
critical points of $K$, we can use Pohozaev identity trick to show
the uniform bound on the solution set to (\ref{eq1}). Since this
case is routine, we just state the result below.

\begin{Thm}\label{uniform3}
For $n=3$, $ \mu\in [0,L]$, and any positive $C^2(B_3)$ function
$K$ with $0<C_1\leq K\leq C_2<\infty$, $|K|_{C^1(B_2)}\leq C_2$,
and at the critical point $x\in B_2$, $\Delta K(x)\not=0$. Then
there is a uniform constant $C$ such that for any solution to
(\ref{eq1}) on $B_3$, we have $$ |u|_{C^2(B)}\leq C.
$$
i.e., the uniform bound on the solution set to (\ref{eq1}).
\end{Thm}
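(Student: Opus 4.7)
The plan is to argue by contradiction, exploiting the Harnack inequality of Theorem~\ref{harnack3} (which in dimension three requires no flatness of $K$) together with a Pohozaev-identity argument in the spirit of Schoen, Chen--Lin \cite{CL9} and Li--Zhang \cite{LZ16}. If the conclusion fails, one extracts sequences $\mu_j\in[0,L]$, $K_j$ satisfying the standing bounds uniformly in $j$, and solutions $u_j$ of \eqref{eq1} on $B_3$ with $\max_{\bar B_1}u_j\to\infty$. Applying the Schoen-type selection procedure from Section~\ref{sect2}, one may assume the maxima are attained at local maximum points $x_j\to x_0\in\bar B_2$ with $M_j:=u_j(x_j)\to\infty$, and the rescaled functions $v_j(y)=M_j^{-1}u_j(x_j+M_j^{-2}y)$ converge in $C^2_{loc}(R^3)$ to the standard bubble $U(y)=(1+|y|^2)^{-1/2}$ by the Caffarelli--Gidas--Spruck classification \cite{CGS4}.

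Applied iteratively along the blow-up profile, Theorem~\ref{harnack3} forces every blow-up point of $\{u_j\}$ to be isolated and, as explained after Theorem~\ref{main2}, in fact an \emph{isolated simple blow-up point}; in particular the decay estimate \eqref{F5} holds with constants independent of $j$. The moving-sphere argument recalled at the end of Section~\ref{sect2} then yields $\nabla K_j(x_j)\to 0$, so that $x_0$ is a critical point of the limit $K_0:=\lim_j K_j$. By the uniform $C^1$ convergence of the $K_j$'s and the hypothesis that $\Delta K\not=0$ at every critical point of $K$ in $\bar B_2$, one may further assume $\Delta K_0(x_0)\not=0$.

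To reach a contradiction, apply the Pohozaev identity on a small but fixed ball $B_\sigma(x_j)$ to $-\Delta u_j=\mu_j u_j+K_j u_j^p$: the interior contribution is $-(p+1)^{-1}\int_{B_\sigma(x_j)}(x-x_j)\cdot\nabla K_j\,u_j^{p+1}\,dx-\mu_j\int_{B_\sigma(x_j)}u_j^2\,dx$, and the boundary contribution is a quadratic form in $u_j$ and $\nabla u_j$ on $\partial B_\sigma(x_j)$. Using \eqref{F5} to control the regular part of $u_j$ away from $x_j$, the spherical Harnack inequality, and the uniform energy bound from \eqref{harnack} to control the $\mu_j$-integral, one passes to the limit in the rescaled identity exactly as in \cite{CL9,LZ16}. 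In dimension three the leading-order balance is precisely between a nonzero explicit multiple of $\Delta K_0(x_0)$---coming, after Taylor-expansion of $K_j$ about $x_j$, from the quadratic term which by the radial symmetry of $U$ reduces to a multiple of $\Delta K_j(x_j)\int_{R^3}|y|^2 U^{p+1}\,dy$---and an explicit limit of the boundary/regular-part contributions. One concludes $\Delta K_0(x_0)=0$, contradicting the hypothesis. Hence $\|u\|_{L^\infty(B_1)}\le C$, and a standard Schauder/Calder\'on--Zygmund bootstrap applied to the bounded right-hand side $\mu u+K u^p$ delivers the claimed bound $|u|_{C^2(B_1)}\le C$.

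The main obstacle is the careful bookkeeping of orders in the Pohozaev expansion: one must isolate the coefficient of $\Delta K_0(x_0)$ from the quadratic part of the Taylor expansion of $K_j$ at $x_j$ while verifying that the linear $\nabla K_j(x_j)$-piece (killed by odd symmetry of $U$), the boundary terms, and the $\mu_j$-contribution are all of strictly smaller order. All the necessary inputs---simple blow-up decay \eqref{F5}, spherical Harnack, the $L^{2n/(n-2)}$ uniform energy bound from \eqref{harnack}, and $C^2_{loc}$ convergence $v_j\to U$---have already been assembled in Section~\ref{sect2}, which is why the authors regard this step as routine.
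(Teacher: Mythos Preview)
Your proposal is correct and follows exactly the route the paper indicates: the paper explicitly declines to prove Theorem~\ref{uniform3}, writing just before its statement that ``with suitable assumption on the set of critical points of $K$, we can use Pohozaev identity trick to show the uniform bound on the solution set to (\ref{eq1}). Since this case is routine, we just state the result below.'' Your contradiction argument---Harnack (Theorem~\ref{harnack3}) $\Rightarrow$ isolated blow-up $\Rightarrow$ simple blow-up via Schoen's trick $\Rightarrow$ Pohozaev balance forcing $\Delta K_0(x_0)=0$---is precisely the outline sketched in Section~\ref{sect1} and the remarks surrounding Theorem~\ref{main2}, specialized to $n=3$ where no flatness hypothesis on $K$ is needed for the Harnack step.
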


\section{The construction of $h_{\lambda}$}\label{sect4}

Observe that in $\Sigma_{\lambda}$, we have the expansion
$$
Q_{\lambda}=\sum_{i=1}^{3}M_j^{-ai}\sum_{|\alpha=i}\frac{1}{\alpha!}K({(\alpha)}(x_j)y^{\alpha}
(\frac{\lambda^{2i}}{|y|^{2i}}-1)(v_j^{\lambda}(y)^p)+0(M_j^{4a}|y|^{2-n}),
$$
where we have used the estimate $v_j^{\lambda}(y)=0(|y|^{2-n})$.
Note that suing the convergence property $$ v_j\to U, \; \;
C^2_{loc}(R^n),$$ and so $v_j^{\lambda}$ is close to
$U^{\lambda}$, we can write it as
$$
Q_{\lambda}=\sum_{i=1}^{3}M_j^{-ai}\sum_{|\alpha=i}\frac{1}{\alpha!}K({(\alpha)}(x_j)y^{\alpha}
(\frac{\lambda^{2i}}{|y|^{2i}}-1)(U^{\lambda}(y)^p)+0(M_j^{4a}|y|^{2-n}),
\; \; in \; \Sigma_{\lambda}.
$$
Again, we shall write the above expansion as
$$
Q_{\lambda}=Q_1+Q_2+Q_3+Q_4+Q_5+0(M_j^{4a}|y|^{2-n})
$$
with \begin{align}
Q_1&=M_j^{-1/a}((\frac{\lambda}{r})^2-1)(U^{\lambda})^{p}\sum_k\theta_k,\nonumber\\
Q_2&= M_j^{-2/a}r^2((\frac{\lambda}{r})^4-1)(U^{\lambda})^{p}(\sum_{k\not l}\partial_{kl}K_j(y_j)\theta_k\theta_l+\\
&+\frac{1}{2}\sum_{k}\partial_{kk}K_j(x_j)(\theta_k^2-1/n)),\nonumber\\
Q_3&=M_j^{-3/a}r^3((\frac{\lambda}{r})^6-1)(U^{\lambda})^{p}
(\frac{1}{6}\sum_{k}\partial^3_{k}K_j(y_j)(\theta_k^3-\frac{3}{n+2}\theta_k)+\nonumber\\
&+\frac{1}{2(n+2)}\sum_{k}\partial^3_{k}K_j(y_j)\theta_k+
\frac{1}{2}\sum_{k}\partial^3_{kkl}K_j(y_j)(\theta_k^2\theta_l-\frac{1}{n+2}\theta_l)\nonumber\\
&+\frac{1}{2(n+2)}\sum_{k\not
l}\partial_{kkl}K_j(y_j)\theta_l+\sum_{k\not
l\not=m}\partial_{klm}K_j(y_j)\theta_k\theta_l\theta_m),
 \nonumber\\
Q_4&=M_j^{-2/a}\Delta K_j(x_j)r^2((\frac{\lambda}{r})^4-1)(U^{\lambda})^{p},\nonumber\\
Q_5&=\sum_{i=1}^3
Q_{5,i}=\sum_{i=1}^{3}M_j^{-ai}\sum_{|\alpha|=i}\frac{1}{\alpha!}K({(\alpha)}\theta_{\alpha}r^i
(\frac{\lambda^{2i}}{r^{2i}}-1)((v_j^{\lambda})^p-U^{\lambda}(y)^p),\label{Q5}
\end{align}
where $\theta_k$ ($k=1,...,n$) are the first eigenfunctions of
Laplacian operator $\Delta_{\theta}$ on the sphere $S^{n-1}$
corresponding to the eigenvalue $n-1$. Note that by our assumption
on $K$, we have $Q_4\leq 0$. This interesting property is firstly
observed by L.Zhang \cite{Z27}. Note also that the linear operator
$L_{\lambda}(U)$ satisfies the assumption of Proposition
\ref{Fund1}. We now define, according to Proposition \ref{Fund1}
for $i=1,2,3$, the functions $h_i$ satisfying
$$
L_{\lambda}(U)h_i=Q_i, \; \; in \; \; \Sigma_{\lambda}
$$
with the boundary condition and the behavior
\begin{equation}\label{bcb}
h_i|_{\partial B_{\lambda}}=0; \; \; |h_i(y)|=\circ(1)r^{2-n}, \;
\; in \; \Sigma_{\lambda}.
\end{equation}

Let's now give more precise description about $h_i$. By using
Proposition \ref{Fund2} in appendix B and the fact that $$
-\Delta_{\theta}\theta_k=(n-1)\theta_k,
$$ we have
\begin{equation}\label{hhh1}
h_1=M_j^{-1/a}\sum_k\partial_kK_j(y_j)\theta_kf_1(r).
\end{equation}
From the estimate for $_1$ in Proposition \ref{Fund2}, we have
\begin{equation} \label{31}
|h_1(y)|\leq C_0|\nabla
K_j(y_j)|M_j^{-1/a}r^{2-n}(1-\frac{\lambda}{r}), \; \;
\lambda<r<T_j.
\end{equation}
Clearly $h_1$ satisfies (\ref{bcb}).

Similarly we have
$$
h_2(y)=M_j^{-2/a}f_2(r)(\sum_{k\not
l}\partial_{kl}K_j(y_j)\theta_k\theta_l+\frac{1}{2}\sum_{k}\partial_{kk}K_j(x_j)(\theta_k^2-1/n))
$$
and
$$
h_3(y)=M_j^{-3/a}[f_3(r)(\frac{1}{6}\sum_{k}\partial^3_{k}K_j(y_j)(\theta_k^3-\frac{3}{n+2}\theta_k)
$$
$$
\; \;
+\frac{1}{2}\sum_{k}\partial^3_{kkl}K_j(y_j)(\theta_k^2\theta_l-\frac{1}{n+2}\theta_l)+
\sum_{k\not=
l\not=m}\partial_{klm}K_j(y_j)\theta_k\theta_l\theta_m))
$$
$$
\; \;
+f_4(r)(\frac{1}{2(n+2)}\sum_{k}\partial^3_{k}K_j(y_j)\theta_k+\frac{1}{2(n+2)}\sum_{k\not
l}\partial_{kkl}K_j(y_j)\theta_l)].
$$
Using the estimates for $f_2, f_3, f_4$ in Proposition
\ref{Fund2}, we have
\begin{equation}\label{34}
|h_2(y)|\leq C_0|\nabla
K_j(y_j)|M_j^{-2/a}r^{2-n}(1-\frac{\lambda}{r}), \; \;
\lambda<r<T_j
\end{equation}
and
\begin{equation}\label{35}
|h_3(y)|\leq C_0|\nabla
K_j(y_j)|M_j^{-3/a}r^{3-n}(1-\frac{\lambda}{r}), \; \;
\lambda<r<T_j.
\end{equation}
Hence, $h_2$ and $h_3$ satisfies (\ref{bcb}) as wanted.

 Let
$$
w_3=w_{\lambda}-h_1-h_2-h_3.
$$
Then we have, in $\Omega(\lambda)$,
$$
L_{\lambda}(v_j)w_3=Q_4+Q_5+\sum_{i=1}^3Q_{6,i}+0(M_j^{4a}|y|^{2-n})
$$
where
\begin{equation}\label{Q6}
Q_{6,i}=K_j((y_j))P(v_j^{\lambda})^{2/a}-K_j(y_j+M_j^{-1/a}y)P(v_j)^{2/a}h_i.
\end{equation}
By this, it is now quite clear that $L_{\lambda}(v_j)w_3$ has very
small positive part. To control this small positive part, we need
to construct a non-positive function $e_{\lambda}$ such that
$\Delta e_{\lambda}$ can control it, which will done in next
section. Then, using $e_{\lambda}$ non-positive, we have
$$
L_{\lambda}(v_j)e_{\lambda}\leq \Delta e_{\lambda}
$$
which implies that
$$
L_{\lambda}(v_j)(w_3+e_{\lambda})\leq L_{\lambda}(v_j)w_3+\Delta
e_{\lambda}\leq 0.
$$
Hence, we can use the maximum principle to $w_3+e_{\lambda}$ and
then the moving plane method gives us the contradiction wanted
provided $h_i$. $i=1,2,3$ can be neglected, which will be the
purpose below and will be studied case by case.

We now follow the argument in Lemma 3.2 of  \cite{CL9} to show
\begin{Pro}\label{har2}
There exist $\delta>0$ and $C>0$ independent of $i$ such that
$$
v_j(y)\leq CU(y), \; \; for \; \; |y|\leq \delta
M_j^{1/(2a^2)}:=\delta N_j.
$$
\end{Pro}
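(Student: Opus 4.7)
The plan is to upgrade the spherical--minimum bound supplied by Propositions \ref{n=3} and \ref{n>4} to a pointwise (equivalently, spherical--maximum) bound, by means of a Harnack chain argument on each sphere, in the spirit of Lemma 3.2 of Chen--Lin \cite{CL9}. The exponent $1/(2a^2)$ (with $a=(n-2)/2$) in the radius $N_j=M_j^{1/(2a^2)}$ is exactly the threshold at which the Harnack constant remains uniform.

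First, I would fix $\epsilon>0$ small and invoke Proposition \ref{n>4} (or the $n=3$ version from Proposition \ref{n=3}, with the $n=4$ case treated separately as indicated in the paper), yielding some $\delta_0=\delta_0(\epsilon)>0$ with
$$
\min_{|y|=r} v_j(y) \leq (1+\epsilon)\,U(r) \quad\text{for}\quad \lambda<r\leq \delta_0 M_j^{2/a^2}.
$$
Since $1/(2a^2)<2/a^2$, the target range $|y|\leq \delta N_j$ with $\delta$ small lies well inside the region where the minimum estimate is available.

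Second, I would pass from min to max on each sphere $\{|y|=r\}$ by covering it with a chain of a bounded number of balls of radius comparable to $r$ and applying the classical interior elliptic Harnack inequality to $v_j$ on each ball. After the rescaling $\tilde v(z)=r^{(n-2)/2}v_j(rz)$ (whose corresponding equation has coefficient $K_j(x_j+M_j^{-1/a}rz)\tilde v^{p-1}$ plus a lower--order linear term of size $\mu_j M_j^{-2/a}r^2$), the Harnack constant only depends on an upper bound for $K_jv_j^{p-1}$ on the relevant balls, and in particular on an $L^{n/2}$ bound for this coefficient. Using the already--established spherical Harnack inequality (\ref{harnack}) together with the minimum estimate, one gets such a bound and concludes
$$
\max_{|y|=r} v_j(y)\leq C_1\min_{|y|=r} v_j(y)\leq C_1(1+\epsilon)\,U(r).
$$

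Third, the main obstacle is making the Harnack chain constant uniform in $r$ all the way up to $r=\delta N_j$. The available minimum bound only controls $v_j$ on average; quantifying the $L^{n/2}$ smallness of $K_jv_j^{p-1}$ on balls of size proportional to $r$ forces a balance between the scaling factor $r$ and the blow--up rate of $v_j^{p-1}$ away from the peak. The exponent $1/(2a^2)$ is precisely where these two effects compensate, so one cannot take a range larger than $\delta N_j$ by this method. Once $\delta$ is chosen small enough so that the Harnack constant $C_1$ is uniform across the chain and independent of $j$, combining the three steps yields $v_j(y)\leq C\,U(y)$ on $|y|\leq \delta N_j$ with $C$ and $\delta$ independent of $j$, which is the claimed estimate.
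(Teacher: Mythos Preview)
Your outline correctly identifies that the passage from the spherical--minimum bound of Proposition~\ref{n=3} to a pointwise bound must go through a Harnack inequality on spheres, and that the only issue is controlling the Harnack constant via an $L^{n/2}$ bound on the potential $K_j v_j^{p-1}$. However, the step where you claim this control is a genuine gap. You write that ``using the already--established spherical Harnack inequality (\ref{harnack}) together with the minimum estimate, one gets such a bound,'' but (\ref{harnack}) is precisely the inequality that the entire section is proving by contradiction; it is not available here, and the spherical Harnack for isolated blow--up points quoted earlier also presupposes structure not yet in hand. The minimum estimate by itself says nothing about the size of $\int v_j^{(p-1)n/2}$ on annuli, so at this point you have no mechanism to make the Harnack constant uniform in $r$.

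The paper supplies exactly the missing ingredient: a Green's function comparison. One represents $v_j$ at a point $z_j$ on the sphere $|y|=\delta_1 N_j$ where the minimum is attained, uses the lower bound on the Green's function of the ball $B_{N_j}$ together with the super--harmonicity $-\Delta v_j\ge K_j v_j^p\ge 0$, and combines this with the upper bound $v_j(z_j)\le (1+\epsilon)(\delta_1 N_j)^{2-n}$ from Proposition~\ref{n=3} to obtain the uniform integral bound $\int_{|\eta|\le \bar\delta N_j} v_j^p\,d\eta\le C$. Since $v_j$ is uniformly bounded on $|y|\le\lambda_j$ (from the selection in Lemma~\ref{schoen}) this upgrades to smallness of $\int_{R\le|\eta|\le \bar\delta N_j} v_j^{(p-1)n/2}$, which is exactly the $L^{n/2}$ input needed for the standard elliptic Harnack inequality on each sphere. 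Your third paragraph anticipates a ``balance'' explaining the exponent $1/(2a^2)$, but in fact that scale enters through this Green's function step, not through the Harnack chain itself. Once you insert the Green's function argument, the rest of your outline matches the paper.
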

\begin{proof}
Let $G_j(y,\eta)$ be the Green function of the Laplacian operator
in the ball $B_j=\{\eta; |\eta|\leq N_j\}$ with zero boundary
value. For any $\epsilon>0$, let $\delta_1>0$ be chosen as in
Proposition \ref{n=3}. Then for $\bar{\delta}<<\delta_1$ small
enough (independent of $j$ ) we have
$$
G_j(y,\eta)\geq \frac{1-\epsilon}{(n-2)|S^{n-1}|}|y-\eta|^{2-n}
$$
for $|y|=\delta_1N_j$ and $\eta|\leq \bar{\delta}N_j$.

Take $z_j$ such that $|z_j|=\delta_1N_j$ and
$$
v_j(z_j)=\min_{|y|\leq \delta N_j} v_j(y).
$$
Then by Proposition \ref{n=3}, we have
$$
(1+\epsilon)(\delta_1N_j)^{2-n}\geq v_j(z_j)\geq \int_{B_j}
G_j(z_j,\eta)(\mu v_j+K_jv_j^p)d\eta
$$
which is bigger than
$$
\frac{C(1-2\epsilon)}{((\delta_1+\bar{\delta})N_j)^{n-2}}\int_{|\eta|\leq
\bar{\delta}N_j} v_j^pd\eta,
$$
where $C$ is a dimension constant. Then we have
$$
\int_{|\eta|\leq \bar{\delta}N_j} v_j^pd\eta\leq C(1+4\epsilon).
$$
Using $v_j\to U$ in $C^2_{loc}(R^n)$, we may choose $R>0$ large
such that
$$
\int_{R\leq |\eta|\leq \bar{\delta}N_j} v_j^pd\eta\leq C\epsilon.
$$
Since $v_j(y)\leq 2$, we have
$$
\int_{R\leq |\eta|\leq \bar{\delta}N_j} v_j^{(p-1)n/2}d\eta\leq
C\epsilon.
$$
Hence we use the standard elliptic Harnack inequality (\cite{GT97}
and \cite{LP87}) to get a uniform constant $c$ such that
$$
\max_{|y|=r}v_j\leq C\min_{|y|=r}v_j
$$
for $r\in [2R,\bar{\delta}N_j/2]$. Using Proposition \ref{n=3}, we
have
\begin{equation}\label{har1}
v_j(y)\leq CU(y)
\end{equation}
for $2R\leq |y|\leq\bar{\delta}N_j/2$. Note that (\ref{har1}) is
clearly true for $|y|\leq 2R$. Hence we complete the proof of
Proposition \ref{har2}.
\end{proof}

We want to compare $v_j$ and $U$. Let $w_j=v_j-U$. Then we have
the equation
\begin{equation}\label{41}
\Delta w_j+\mu_j M_j^{-2/a}w_j+
K_j(y_j+M_j^{-1/a}y)P(v_j)^{2/a}w_{\lambda}=-\mu_j
M_j^{-2/a}U+(n(n-2)-K_j(M_j^{-1/a}y+y_j)U^p
\end{equation}
where
$$
P_j^{2/a}:=(v_j^p-U^p)/w_j.
$$
Using the Pohozaev identity (see Proposition \ref{pohozaev} in
appendix A) and the argument of Lemma 3.3 in \cite{CL9}, we have,
for some $ \delta_1\leq \delta$,
\begin{equation}\label{key2}
\max_y|v_j(y)-U(y)|\leq CM_j^{-1/a}, \; \; |y|\leq
\delta_1M_j^{-1/(2a^2)} \end{equation}

Then we use the standard elliptic estimates to find
\begin{equation}\label{key3}
\sigma_j:=|v_j(y)-U(y)|_{C^2(B_3)}\leq CM_j^{-1/a}
\end{equation}

\subsection{Completion of the proof of Theorem \ref{main2} when $n=4,5$ and some remarks}
In this subsection we prove Theorem \ref{main2} when $n=4$. Since
some estimates here will be used in higher dimension, we allow
$n\geq 4$ until the end of the proof.

\begin{proof} ( of Theorem \ref{main2} when $n=4$)

Write
$$
Q_{\lambda}=Q_1+0(M_j^{-1/a^2}r^{-n})
$$
and
 define
 $$
W_1=w_{\lambda}-h_1.
 $$
 Then we have
 $$
L(v_j)W_1=0(M_j^{-1/a^2}r^{-n})+Q_{5,1}+Q_{6,1}.
 $$
Since
$$
v_j(0)=U(0)=1, \; \; \nabla v_j(0)=\nabla U(0)=0,
$$
we have
$$
|v_j(y)-U(y)|\leq C\sigma_j|y|^2, \; \; in \; \; B_3.
$$

Hence we have
$$
|v_j^{\lambda}(y)-U{\lambda}(y)|\leq C\sigma_j|y|^{-n}, \; \;for
\; \; |y|>\lambda.
$$

Using the mean value theorem in Calculus we have
$$
v_j^{\lambda}(y)^p-U{\lambda}(y)^p=0(\sigma_j|y|^{-4-n})
$$
which is
$$
\; \;  \; 0(M_j^{-1/a}|y|^{-4-n}), \; \; |y|>\lambda.
$$

Using the expression of $Q_{5,1}$, we have
\begin{equation}\label{48}
|Q_{5,1}|\leq C|\nabla K_j(y_j)|M_j^{-1/a}\sigma_j|y|^{-3-n}\leq
M_j^{-1/a}\sigma_j|y|^{-3-n}\leq M_j^{-2/a}|y|^{-3-n}, \; \;
|y|>\lambda.
\end{equation}
Similarly, using the estimate for $h_1$, we have
$$
|Q_{6,1}|\leq CM_j^{-2/a}r^{-4}, \; \; in \; \; \Omega(\lambda).
$$
Hence we have
$$
L(v_j)W_1\leq CM_j^{-2/a}r^{-4}, \; \; in \; \; \Omega(\lambda).
$$
Introduce $\bar{h}=QM_j^{-2/a}f_{3}(r)$ for large $Q>0$. Then we
have
$$
L(v_j)(W_1+\bar{h})\leq 0, \; \; in \; \Omega(\lambda).
$$
 Note that
$$
\bar{h}=\circ(1)r^{2-n} \; \; in  \; \; \Sigma_{\lambda}.
$$
So, we can use moving plane method to move to some $\lambda>1$
with
$$
W_1+\bar{h}>0,\; \; in \; \Omega(\lambda)
$$
which gives us a contradiction when $n=4$. Hence, when $n=4$,
Theorem \ref{main2} has been proved.

\end{proof}

\textbf{We now give some important remarks}. From the proof above,
we have actually proved the following

\begin{Pro}\label{n>5} Assume  $n\geq 5$. For $\epsilon>0$, there is a
$\delta(\epsilon)$ such that for all $r\leq
\delta(\epsilon)M_j^{1/a^2}$,
$$
\min_{|y|=r}v_j(y)\leq (1+\epsilon)U(y).
$$
\end{Pro}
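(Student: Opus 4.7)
The plan is to reproduce the $n=4$ argument in the annulus $\Sigma_\lambda = B(0, r_j) \setminus \bar{B}_\lambda$, and verify that in every dimension $n \geq 5$ the moving-sphere contradiction still closes, provided we do not push $r_j$ beyond the scale $M_j^{1/a^2}$. Argue by contradiction: assume there exist $\epsilon_0 > 0$, $\delta_j \to 0$, and $r_j \leq \delta_j M_j^{1/a^2}$ such that $v_j(y) > (1 + \epsilon_0) U(r_j)$ for every $|y| = r_j$. Since $v_j \to U$ in $C^2_{loc}(R^n)$, necessarily $r_j \to \infty$.

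Set $W_1 = w_\lambda - h_1$, with $h_1$ built from the leading piece $Q_1$ of $Q_\lambda$ as in (\ref{hhh1}) and satisfying (\ref{31}). The key computation carried out for $n = 4$ is dimension-independent: combining (\ref{48}) with the analogous bound $|Q_{6,1}| \leq C M_j^{-2/a} r^{-4}$ on $\Omega(\lambda) = \{v_j \leq 2 v_j^\lambda\}$ gives
\begin{equation*}
L_\lambda(v_j) W_1 \leq C M_j^{-2/a} r^{-4} + O(M_j^{-4/a} r^{2-n}) \quad \text{ in } \Omega(\lambda).
\end{equation*}
Introduce the companion $\bar{h} = Q M_j^{-2/a} f_3(r)$ with $Q$ large: by Proposition \ref{Fund2} it is non-positive, vanishes on $\partial B_\lambda$, obeys $\Delta \bar{h} \leq - Q M_j^{-2/a} r^{-4}$, and has size $o(r^{2-n})$ on $\Sigma_\lambda$ exactly when $r \leq \delta M_j^{1/a^2}$, which is the algebraic source of the scale in the statement. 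Choosing $Q$ large enough yields $L_\lambda(v_j)(W_1 + \bar{h}) \leq 0$ in $\Omega(\lambda)$, while on the complement the function $W_1 + \bar{h}$ is automatically positive for $\lambda$ close to $1$ since $v_j > 2 v_j^\lambda$ there.

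For the boundary data and the moving-sphere step: on $\partial B_\lambda$ all of $w_\lambda$, $h_1$, $\bar{h}$ vanish; on $|y| = r_j$, the contradiction hypothesis yields $v_j \geq (1 + \epsilon_0) U(r_j) \geq c\, r_j^{2-n}$, while $v_j^\lambda(y)$, $h_1$, and $\bar{h}$ are each $o(r_j^{2-n})$, so $W_1 + \bar{h} > 0$ on the outer boundary provided $|\lambda - 1|$ is small. Starting from $\lambda_0 = 1 - \epsilon_1$, where $w_{\lambda_0}$ approximates $U - U^{\lambda_0} > 0$ and dominates the $o(1)$ corrections throughout $\Sigma_{\lambda_0}$, the maximum principle applied to $W_1 + \bar{h}$ lets me increase $\lambda$ past $1$ to some $\lambda_1 \in (1, 1 + \epsilon_1)$ with $W_1 + \bar{h} \geq 0$ on $\Sigma_{\lambda_1}$. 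But on any fixed compact annulus $\lambda_1 < |y| < 2\lambda_1$, the corrections $h_1, \bar{h}$ tend to $0$ (they carry the prefactors $M_j^{-1/a}$ and $M_j^{-2/a}$), whereas $v_j - v_j^{\lambda_1} \to U - U^{\lambda_1} < 0$ strictly by $C^2_{loc}$ convergence. This is the desired contradiction.

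The main obstacle is pinning down the scale: the single correction $h_1$ suffices because after its subtraction the residual radially symmetric error is of order $M_j^{-2/a} r^{-4}$, and the companion $\bar{h}$ absorbs it only so long as $|\bar{h}| = o(U(r))$, i.e., $M_j^{-2/a} \leq C r^{2-n}$, equivalently $r \leq M_j^{1/a^2}$. Extending beyond this scale, and hence recovering the full Harnack inequality (\ref{harnack}) instead of the weaker $(1 + \epsilon) U$ bound, would require the higher-order corrections $h_2, h_3$ combined with the favorable sign $Q_4 \leq 0$, which is what the subsequent sections treat for dimensions $n \geq 6, 7$.
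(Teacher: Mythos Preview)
Your argument is correct and follows essentially the same route as the paper: contradict the inequality, work in $\Sigma_\lambda = B(0,r_j)\setminus\bar B_\lambda$, set $W_1=w_\lambda-h_1$, bound $L_\lambda(v_j)W_1$ by $O(M_j^{-2/a}r^{-4})$ on $\Omega(\lambda)$, absorb this with the companion $\bar h=QM_j^{-2/a}f_3$, and run the moving-sphere method past $\lambda=1$ to contradict $v_j\to U$ in $C^2_{loc}$. The paper's own proof is a two-line pointer back to the $n=4$ computation; you have simply unpacked that pointer, and in addition made explicit why the threshold $r\leq\delta M_j^{1/a^2}$ arises (namely so that $|\bar h|\lesssim M_j^{-2/a}=o(r^{2-n})$), which the paper leaves implicit.
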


In fact, assume not. Then there exist $\epsilon_0>0$, a sequence
$\delta_j\to 0$, and a sequence $r_j\leq \delta_jM_j^{1/a^2}$ such
that
$$
\min_{|y|=r_j}v_j(y)>(1+\epsilon_0)r_j^{2-n}.
$$
Let
$$
\Sigma_{\lambda}=B(0,r_j)-\bar{B}_{\lambda}.
$$
Then as in the argument above we have
$$
L(v_j)(W_1+\bar{h})\leq 0, \; \; in \; \Omega(\lambda).
$$
Again, using the moving plane method to get the contradiction.

So Proposition \ref{n>5} has been proved. Using the same argument
as in Lemma 3.2 in \cite{CL9} with our assumption (\ref{key}) and
the standard elliptic estimate \cite{GT97}, we have

\begin{Pro}\label{n>51} Assume $n\geq 5$. For some constant $\delta_2>0$ such that
$$
v_j(y)\leq CU(y), \; \; |y|\leq \delta_2M_j^{1/a^2}
$$
and
$$
|\nabla v_j(y|)\leq C|y|^{1-n}, \; \; |y|\leq \delta_2M_j^{1/a^2}.
$$
\end{Pro}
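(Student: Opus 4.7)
The plan is to mirror the argument given for Proposition \ref{har2}, with Proposition \ref{n>5} taking the role that Proposition \ref{n=3} played there. Set $N_j := M_j^{1/a^2}$ and let $G_j(y,\eta)$ denote the Dirichlet Green's function of $-\Delta$ on the ball $\{|\eta|\leq N_j\}$. Given $\epsilon>0$, pick $\delta_1>0$ from Proposition \ref{n>5} so that $\min_{|y|=r} v_j(y) \leq (1+\epsilon) U(r)$ for $r \leq \delta_1 N_j$, then choose $\bar\delta \ll \delta_1$ small enough (independent of $j$) that the standard lower bound
$$
G_j(y,\eta) \geq \frac{1-\epsilon}{(n-2)|S^{n-1}|}\, |y-\eta|^{2-n}
$$
holds for $|y|=\delta_1 N_j$ and $|\eta|\leq \bar\delta N_j$.

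Next I would select $z_j$ with $|z_j|=\delta_1 N_j$ that realizes the minimum of $v_j$ on that sphere, and represent $v_j(z_j)$ against the right-hand side $\mu_j M_j^{-2/a} v_j + K_j v_j^p$ of the rescaled equation. This yields
$$
(1+\epsilon)(\delta_1 N_j)^{2-n} \geq v_j(z_j) \geq \frac{C(1-2\epsilon)}{((\delta_1+\bar\delta)N_j)^{n-2}} \int_{|\eta|\leq \bar\delta N_j} v_j^p\, d\eta,
$$
so $\int_{|\eta|\leq \bar\delta N_j} v_j^p\, d\eta \leq C(1+4\epsilon)$. Since $v_j \to U$ in $C^2_{loc}(R^n)$, for large fixed $R$ the tail obeys $\int_{R\leq |\eta|\leq \bar\delta N_j} v_j^p\, d\eta \leq C\epsilon$, and because $v_j \leq 2$ outside the concentration core this upgrades to the same bound for $v_j^{(p-1)n/2}$ — exactly the integrability threshold required to invoke the elliptic Harnack inequality of \cite{GT97} and \cite{LP87} for the operator with zeroth-order coefficient $\mu_j M_j^{-2/a} + K_j v_j^{p-1}$. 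This produces $\max_{|y|=r} v_j \leq C \min_{|y|=r} v_j$ on the shell $r \in [2R, \bar\delta N_j/2]$, and combining with Proposition \ref{n>5} gives $v_j(y)\leq CU(y)$ there; on $|y|\leq 2R$ the same bound is immediate from the $C^2_{loc}$ convergence to $U$. Setting $\delta_2 := \bar\delta/2$ establishes the first inequality.

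For the gradient estimate I would run a standard rescaling argument. On each dyadic shell $|y|\sim r$ with $r \leq \delta_2 N_j$, let $\tilde v(z) = r^{n-2} v_j(rz)$ on $|z|\sim 1$; by the sup bound just established, $\tilde v$ is uniformly bounded and satisfies a scaled equation whose coefficients are uniformly controlled by the $C^\beta$ assumption on $K$. Interior Schauder (or $C^1$) estimates applied to $\tilde v$ and then scaled back give $|\nabla v_j(y)| \leq C r^{1-n} = C|y|^{1-n}$ on that shell; patching the shells yields the claim.

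The principal obstacle is the same one faced in Lemma 3.2 of \cite{CL9}: a mere one-sided \emph{minimum}-on-spheres estimate must be bootstrapped into a two-sided sup bound, and this hinges on the careful tuning of $\bar\delta \ll \delta_1$ so that the Green's function comparison is clean, together with the use of $C^2_{loc}$ convergence to shave the near-origin contribution from the $L^p$ integral before elliptic Harnack can be invoked. Once those pieces are assembled, the rest is a direct transcription of Lemma 3.2 of \cite{CL9} with $N_j = M_j^{1/a^2}$ replacing the coarser scale $M_j^{1/(2a^2)}$ of Proposition \ref{har2}; the improvement comes solely from feeding in the sharper Proposition \ref{n>5} in place of Proposition \ref{n=3}.
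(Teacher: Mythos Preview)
Your proposal is correct and follows essentially the same approach as the paper: the paper's own proof consists of the single sentence ``Using the same argument as in Lemma 3.2 in \cite{CL9} with our assumption (\ref{key}) and the standard elliptic estimate \cite{GT97},'' and what you have written is precisely a faithful expansion of that sentence, with Proposition \ref{n>5} substituted for Proposition \ref{n=3} and the scale $N_j=M_j^{1/a^2}$ in place of $M_j^{1/(2a^2)}$. The only point you leave implicit that the paper flags explicitly is the standing hypothesis (\ref{key}), but it plays no new role at this step beyond guaranteeing that the rescaled solution $v_j$ is defined on a ball large enough to contain $\{|y|\le N_j\}$.
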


We let
$$
L_j=\frac{1}{2}\delta_2M_j^{1/a^2}.
$$

Assume that $|\nabla K_j(y_j)|\not=0$ for large some large $j$.
For any $e\in S^{n-1}$, we let $$ \tilde{v}_j(y)=v_j(y+e)
$$
and
$$
\tilde{K}_j(y)=K_j(M_j^{-1/a}(y+e)+y_j).
$$
We choose $e$ a unit vector such that the vector defined by
$(\int_{|y|\leq L_j}\nabla_k K_j(y_j)y_k\tilde{v}_j(y)^{p+1}) $
(which is non-zero) is lower bounded by $C|\nabla K_j(y_j)|$ for
some uniform $C>0$. We now use the Pohozaev identity (\ref{Ph1})
in the ball $|y|\leq L_j$ and get
$$
\int_{|y|\leq L_j}(\nabla\tilde{ K}\cdot y)\tilde{v}_j(y)^{p+1}
=0(M_j^{-2/a}),
$$
where the right side consists of boundary terms and lower order
terms. Using the definition of $\tilde{K}_j(y)$, we have
$$
\int_{|y|\leq L_j}(\nabla\tilde{ K}_j\cdot y)\tilde{v}_j(y)^{p+1}
$$
$$
\; \; \; =M_j^{-1/a}\int_{|y|\leq L_j}(\nabla
K_j(M_j^{-1/a}(y+e)+y_j)\cdot y)\tilde{v}_j(y)^{p+1}
$$
$$
\; \; \;= M_j^{-1/a}\int_{|y|\leq L_j}(\nabla K_j(y_j)\cdot
y)\tilde{v}_j(y)^{p+1}+0(M_j^{-2/a})
$$
$$
\; \; \geq C|\nabla K_j(y_j)|M_j^{-1/a}++0(M_j^{-2/a}).
$$
Hence, we have
\begin{equation}\label{55}
|\nabla K_j(y_j)|\leq CM_j^{-1/a}.
\end{equation}

Going back to the equation (\ref{41}), we see that the right side
of (\ref{41}) is bounded by
$$
0(M_j^{-2/a})+(n(n-2)-K_j(M_j^{-1/a}y+y_j)U^p,
$$
and using the second order Taylor's expansion,
$$
K_j(M_j^{-1/a}y+y_j)U^p=-\nabla_kK_j(y_j)M_j^{-1/a}y_kU^p+0(M_j^{-3/a})(|y|^{1-n}).
$$
Then the equation (\ref{41}) is of the form
$$
\Delta w_j+\mu_j M_j^{-2/a}w_j+
K_j(y_j+M_j^{-1/a}y)P(v_j)^{2/a}w_{\lambda}=0(M_j^{-2/a})(1+|y|^{-n})+0(M_j^{-3/a})(|y|^{1-n})
$$
with the conditions
$$
w_j(0)=0=|\nabla w_j(0)|.
$$
Using this equation and the bound
$$
|w_j(y)|\leq CU(y), \; \; |y|\leq L_j,
$$
we may follow the argument of Lemma 3.3 in \cite{CL9} to obtain
that
\begin{equation}\label{56}
\max_y|v_j(y)-U(y)|\leq CM_j^{-2/a}, \; \; |y|\L_j.
\end{equation}
Hence, we have
\begin{equation}\label{57}
\sigma_j\leq CM_j^{-2/a}\; \; and \; \;
|v^{\lambda}_j(y)-U^{\lambda}(y)|\leq CM_j^{-2/a}r^{-n}.
\end{equation}
Using this improvement, we show that

\begin{Pro}\label{n>6}
For $n\geq 6$ and some constant $\delta_4>0$, it holds
\begin{equation}\label{58}
v_j(y)\leq C(U(y), \; \; |y|\leq \delta_4 M_j^{-3/(2a^2)}
\end{equation}
\end{Pro}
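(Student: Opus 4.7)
The plan is to argue by contradiction along the lines of the proof of Proposition \ref{n>5}, but now carried out in the larger annular region $\Sigma_\lambda=B(0,r_j)\setminus \bar B_\lambda$ with $r_j\le \delta_j M_j^{3/(2a^2)}$, using the sharper information available from (\ref{55})--(\ref{57}) and the sign of $\Delta K_j$ coming from assumption (iii). After proving the analogue
\begin{equation*}
\min_{|y|=r}v_j(y)\le (1+\epsilon)U(r)\qquad\text{for } r\le \delta(\epsilon)M_j^{3/(2a^2)},
\end{equation*}
the pointwise bound (\ref{58}) will follow by repeating the Green-function/elliptic-Harnack argument used to deduce Proposition \ref{har2} from Proposition \ref{n=3}. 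So the real content is the improved moving-sphere step.

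Concretely, I would assume the conclusion fails and select $r_j\le \delta_j M_j^{3/(2a^2)}$ and $\epsilon_0>0$ with $\min_{|y|=r_j}v_j>(1+\epsilon_0)r_j^{2-n}$. As in Section \ref{sect4}, expand $Q_\lambda=Q_1+Q_2+Q_3+Q_4+Q_5+O(M_j^{-4/a}r^{2-n})$ and subtract off the companion functions $h_1,h_2,h_3$ from $w_\lambda=v_j-v_j^\lambda$. The crucial new inputs are: (a) the Pohozaev-based bound (\ref{55}), $|\nabla K_j(y_j)|\le CM_j^{-1/a}$, which squeezes $|h_1|$ down by an extra factor $M_j^{-1/a}$ via (\ref{31}); (b) the improved $\sigma_j\le CM_j^{-2/a}$ from (\ref{57}), which upgrades the estimates (\ref{48}) for $Q_{5,i}$ and the corresponding ones for $Q_{6,i}$; and (c) the sign of $Q_4$: because assumption (iii) gives $\Delta K_j\ge 0$ (and $\ge D>0$ for $n\ge 7$) and $(\lambda/r)^{4}-1<0$ for $r>\lambda$, we obtain $Q_4\le 0$ throughout $\Sigma_\lambda$, and for $n\ge 7$ a strictly negative contribution that will dominate several of the remaining error terms.

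With these improvements in hand, set $W=w_\lambda-h_1-h_2-h_3$. The computation of $L_\lambda(v_j)W$ now yields, in $\Omega(\lambda)=\{v_j\le 2v_j^\lambda\}$, a right-hand side of size
\begin{equation*}
L_\lambda(v_j)W\le Q_4+C M_j^{-3/a^2}r^{-n}+(\text{terms of order }M_j^{-\alpha}r^{-\beta}\text{ controlled by }Q_4),
\end{equation*}
where $\alpha,\beta$ depend on $n$. I would then introduce a non-positive companion $\bar h=Q M_j^{-\gamma} f_k(r)$ from the family in Proposition \ref{Fund2}, with $\gamma$ chosen (case $n=6$ vs $n\ge 7$) so that $\bar h=o(1)r^{2-n}$ on $\Sigma_\lambda$ for $r\le \delta M_j^{3/(2a^2)}$, and so that $\Delta \bar h$ absorbs the residual positive part of $L_\lambda(v_j)W$ (using $Q_4\le 0$, and $Q_4\le -cD\,r^{-2}(U^\lambda)^p$ for $n\ge 7$, to swallow the middle-order errors). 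This gives $L_\lambda(v_j)(W+\bar h)\le 0$ in $\Omega(\lambda)$.

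The rest is the standard moving-sphere termination: for $\lambda_0<1$ close to $1$, $W+\bar h>0$ on $\partial\Sigma_{\lambda_0}$ by the choice of $r_j$, the decay of $\bar h$, and the fact that $U-U^\lambda>0$; the maximum principle propagates this past $\lambda=1$ to some $\lambda_1>1$, contradicting $v_j-v_j^{\lambda_1}\to U-U^{\lambda_1}<0$ locally for $r>\lambda_1$. Once the min estimate is established, Green's function estimates on $B_{\delta M_j^{3/(2a^2)}}$, combined with the $L^p$ energy bound inherited from the min estimate and the elliptic Harnack inequality of \cite{GT97}, upgrade it to the pointwise bound (\ref{58}).

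The main obstacle will be the bookkeeping in the last paragraph of step (b): balancing the exponent $\gamma$ in the companion $\bar h$ against the enlarged radius $M_j^{3/(2a^2)}$ so that $\bar h$ remains $o(1)r^{2-n}$ while $\Delta\bar h$ still dominates the error $M_j^{-3/a^2}r^{-n}+|Q_{6,i}|$ in $\Omega(\lambda)$. For $n=6$ this is delicate because $\Delta K\ge 0$ is only a sign condition, so the negativity of $Q_4$ must be leveraged directly against $Q_{6,i}$ rather than as a quantitative reserve, whereas for $n\ge 7$ the lower bound $\Delta K\ge D$ provides genuine slack and the argument becomes considerably cleaner.
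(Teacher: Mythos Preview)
Your overall template---contradiction, moving-sphere with companion functions, then Green/Harnack upgrade---matches the paper's, and what you outline would work. But you are doing noticeably more than the paper does at this stage, and importing tools that the paper deliberately reserves for the \emph{later} steps (the proof of Theorem~\ref{main2} for $n=6$ and $n\ge 7$).

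Two concrete differences. First, the paper subtracts only $h_1$ and $h_2$, setting $W_2=w_\lambda-h_1-h_2$; it does not bring in $h_3$ here. The third-order piece $Q_3$ is simply absorbed into the error $O(M_j^{-3/a}r^{1-n})$, and after dropping $Q_4$ by sign one lands on the clean inequality (\ref{comp5}):
\[
L(v_j)W_2\le O(M_j^{-3/a}r^{1-n})+O(M_j^{-4/a}r^{-4})\quad\text{in }\Omega(\lambda),
\]
using only (\ref{55}), (\ref{57}) to sharpen the $Q_{5,i}$, $Q_{6,i}$ bounds for $i=1,2$. Second, the paper never splits into $n=6$ versus $n\ge 7$ here and never invokes the quantitative lower bound $\Delta K\ge D$; only $Q_4\le 0$ is used, uniformly for all $n\ge 6$. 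Your concern that for $n=6$ the sign of $Q_4$ must be ``leveraged directly against $Q_{6,i}$'' is unfounded at this step: the $Q_{6,i}$ are controlled by straight size estimates (\ref{661})--(\ref{672}), and $Q_4$ is just discarded. The quantitative $\Delta K\ge D$ and the subtraction of $h_3$ enter only in the final completion of Theorem~\ref{main2} for $n\ge 7$, where the radius is pushed all the way to $T_j$ rather than $\delta M_j^{3/(2a^2)}$.

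So your proof is correct but over-engineered; the paper's version is shorter and keeps the stronger hypotheses in reserve for where they are actually needed.
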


\begin{proof}
Note that for $n\geq 5$, we have
$$
Q_{\lambda}=Q_1+Q_2+Q_3+0(M_j^{-3/a}r^{1-n}).
$$

Let
$$
W_2=w_{\lambda}-h_1-h_2.
$$
Using $Q_4\leq 0$, we obtain that
\begin{equation}\label{59}
L(v_j)W_2\leq
\sum_{i=1}^2Q_{5,i}+\sum_{i=1}^2Q_{6,i}+0(M_j^{-3/a}r^{1-n}).
\end{equation}

Using (\ref{55}) and (\ref{57}) we can improve the bound in
(\ref{48}) into
\begin{equation}\label{60}
|Q_{5,1}|\leq CM_j^{-8/a}|y|^{-3-n}, \; \; |y|>\lambda.
\end{equation}

Using (\ref{57}) we bound
\begin{equation}\label{61}
|Q_{5,2}|\leq
CM_j^{-2/a}|y|^2|(v_j^{\lambda})^p-(U^{\lambda})^p|\leq
CM_j^{-8/a}|y|^{-2-n}, \; \; |y|>\lambda.
\end{equation}

We now consider the bounds for $Q{6,1}$ and $Q_{6,2}$. Write
$$
K_j(y_j)P(v_j^{\lambda})^{2/a}-K_j(M_j^{-1/a}y+y_j)P(v_j)^{-2/a}
$$
$$
=(K_j(y_j)-K_j(M_j^{-1/a}y+y_j))P(v_j^{\lambda})^{2/a}+K_j(M_j^{-1/a}y+y_j)(P(v_j^{\lambda})^{2/a}-P(v_j)^{-2/a}).
$$
Using (\ref{55})
$$
(K_j(y_j)-K_j(M_j^{-1/a}y+y_j))P(v_j^{\lambda})^{2/a}=0(M_j^{-2/a}|y|^{-2})
\; \; \Sigma_{\lambda}
$$
Note that for $r\leq L_j$, we have
$$
P(v_j)^{-2/a}=P(v_j^{\lambda})^{2/a}+0(M_j^{-2/a})
$$
and then
$$
K_j(y_j)P(v_j^{\lambda})^{2/a}-K_j(M_j^{-1/a}y+y_j)P(v_j)^{-2/a}=0(M_j^{-2/a}|y|^{n-6})
$$

For $r\geq L_j$ and in $\Omega(\lambda)$, we have
$$
P(v_j)^{-2/a}=P(v_j^{\lambda})^{2/a}+0(r^{2-n})
$$
and then
$$
K_j(y_j)P(v_j^{\lambda})^{2/a}-K_j(M_j^{-1/a}y+y_j)P(v_j)^{-2/a}=0(M_j^{-2/a}|y|^{-2})+0(r^{-4}),
$$
which can be further estimated by
$$
|K_j(y_j)P(v_j^{\lambda})^{2/a}-K_j(M_j^{-1/a}y+y_j)P(v_j)^{2/a}|\leq
0(r^{-4})
$$
where we have used the fact that $r\leq \circ(1)M_j^{1/a}$. Then
using (\ref{31}) we have, in $\Omega(\lambda)$ with $r>L_j$,
\begin{equation}\label{661}
|Q_{6,1}|\leq 0(M_j^{-2/a}r^{-2-n})
\end{equation}
and in $\Omega(\lambda)$ with $r\leq L_j$,
\begin{equation}\label{662}
|Q_{6,1}|\leq 0(M_j^{-4/a}r^{-4}).
\end{equation}
Using (\ref{34}) and (\ref{65}) we have in $\Omega(\lambda)$ with
$r>L_j$,
\begin{equation}\label{671}
|Q_{6,2}|\leq 0(M_j^{-2/a}r^{-2-n})
\end{equation}
and in $\Omega(\lambda)$ with $r\leq L_j$,
\begin{equation}\label{672}
|Q_{6,2}|\leq 0(M_j^{-4/a}r^{-4}).
\end{equation}
By $0(M_j^{-2/a}r^{-2-n})=0(M_j^{-4/a}r^{-4})$ for $r>L_j$, and
(\ref{59})-(\ref{61}),(\ref{661})-(\ref{672}) we have, in
$\Omega(\lambda)$,
\begin{equation}\label{comp5}
L(v_j)W_2\leq 0(M_j^{-3/a}r^{1-n})+0(M_j^{-4/a}r^{-4}).
\end{equation}
Again we can follow the argument of Lemma 3.3 in \cite{CL9} to
obtain (\ref{58}) as wanted.
\end{proof}

\begin{proof} ( of Theorem \ref{main2} when $n=5$)
The idea is the same as the proof of Theorem \ref{main2} when
$n=4$. For large constant $Q>1$, we let
$$
\tilde{h}=QM_j^{-3/a}f_{3}.
$$
Using (\ref{comp5}) we have, in $\Omega(\lambda)$,
$$
L(v_j)(W_2+\tilde{h})\leq 0.
$$
Using the moving plane method again we get a contradiction. So the
proof of Theorem \ref{main2} when  $n=5$ is done.
\end{proof}

\subsection{Completion of the proof of Theorem \ref{main2} when $n=6$}

In this subsection, we assume that $n\geq 6$. Recall that
$$
Q_{\lambda}=Q_1+Q_2+Q_3+Q_4+\sum_{i=1}^3Q_{5,i}+0(M_j^{4a}|y|^{2-n})
$$
and in $\Omega(\lambda)$,
$$
L_{\lambda}(v_j)w_3=Q_4+Q_5+\sum_{i=1}^3Q_{6,i}+0(M_j^{4a}|y|^{2-n})
$$

We need to bound $Q_{5,3}$ and $Q_{6,3}$. Using the definition of
$Q_{5,3}$ in (\ref{Q5}) and the bound (\ref{57}) we have
\begin{equation}\label{Q53}
|Q_{5,3}|=0(M_j^{-5/a}r^{-1-n})
\end{equation}
Similarly, we have by (\ref{651}) and (\ref{652}) that for
$r<L_j$,
\begin{equation}\label{Q631}
Q_{6,3}=0(M_j^{-5/a}r^{-3})
\end{equation}
and for $r>L_j$,
\begin{equation}\label{Q632}
Q_{6,3}=0(M_j^{-3/a}r^{-1-n}).
\end{equation}
Note that for $r>L_j$,
$$
0(M_j^{-2/a}r^{-2-n})=0(M_j^{-4/a}r^{-4}).
$$

 Using (\ref{Q53})-(\ref{Q632})  we obtain in
$\Omega(\lambda)$, that
$$
L_{\lambda}(v_j)w_3=0(M_j^{-4/a}r^{-4}).
$$

For large constant $Q>1$ and $n=6$, we let
$$
\tilde{h}=QM_j^{-4/a}f_{3}.
$$
Note that for $r\in (\lambda, T_j)$,
$$\tilde{h}=\circ(1).
$$
 Using the expressions above we have, in $\Omega(\lambda)$,
$$
L(v_j)(w_3+\tilde{h})\leq 0.
$$
Using the moving plane method again we get a contradiction. So the
proof of Theorem \ref{main2} when  $n=6$ is done.

\subsection{Completion of the proof of Theorem \ref{main2} when $n\geq 7$}

Assume that $n\geq 7$ in this subsection. By assumption on $K$, we
have
$$
Q_4\leq
\frac{\Gamma}{2n}M_j^{-2/a}r^2(\frac{\lambda^4}{r^4}-1)(U^{\lambda})^p.
$$

Then by  using (\ref{Q53})-(\ref{Q632}), we obtain  that in
$\Omega(\lambda)$ with $r>L_j$,
$$
L_{\lambda}(v_j)w_3\leq
\frac{\Gamma}{2n}M_j^{-2/a}r^2(\frac{\lambda^4}{r^4}-1)(U^{\lambda})^p
+0(M_j^{-4/a}r^{2-n})+ 0(M_j^{-2/a}r^{-2-n}),
$$
and in $\Omega(\lambda)$ with $r\leq L_j$,
$$
L_{\lambda}(v_j)w_3\leq
\frac{\Gamma}{2n}M_j^{-2/a}r^2(\frac{\lambda^4}{r^4}-1)(U^{\lambda})^p
+0(M_j^{-4/a}r^{2-n})+ 0(M_j^{-4/a}r^{-4}),
$$ Hence, since $r\leq \epsilon_jM_j^{1/a}$,
 we have
$$
L_{\lambda}(v_j)w_3<0
$$
for $r>\frac{3}{2}\lambda$. Note that in $\Omega(\lambda)$ with
$\lambda<r<frac{3}{2}\lambda$, we have
$$
L_{\lambda}(v_j)w_3\leq 0(M_j^{-4/a}r^{-4}),
$$
whose positive part need to be controlled.  To do this, we let
$f_{\lambda}$ satisfy
$$
-\Delta f=1, \; \; in \; \; B_{2\lambda}-B_{\lambda}
$$
with the boundary condition that $f=0$ on $\partial
B_{\lambda}\bigcup
\partial B_{2\lambda}$. Note that there is a constant $C>0$ such
that
$$
|f_{\lambda}|\leq C(1-\frac{\lambda}{r}), \; \;  in \; \;
B_{2\lambda}-B_{\lambda}.
$$
We extend $f_{\lambda}$ smoothly so that $f_{\lambda}=0$ outside
$B_{3\lambda}$.

For large constant $Q>1$, we let
$$
\tilde{h}=QM_j^{-4/a}f_{\lambda}.
$$
Then we have
$$
-\Delta \tilde{h}=QM_j^{-4/a}, \; \; in \; \;
B_{2\lambda}-B_{\lambda}.
$$

 We now choose $Q>>1$ such that
$$
\Delta \tilde{h}+0(M_j^{-4/a}r^{-7/2}\leq 0, \; \; in \; \;
B_{2\lambda}-B_{\lambda}.
$$

Since
$$
|\tilde{h}|\leq CM_j^{-4/a}(1-\frac{\lambda}{r}), \; \;  in \; \;
B_{2\lambda}-B_{\lambda},
$$
we have, in $\Omega(\Omega)$,
$$
|K_j(M_j^{-1/a}y+y_j)P(v_j)^{2/a}\tilde{h}|\leq
\frac{\Gamma}{32n}M_j^{-2/a}r^2(\frac{1-\lambda^4}{r^4})(U^{\lambda})^p,
$$
which leads us to
$$
L(v_j)(w_3+\tilde{h})\leq 0, \; \; \Omega(\lambda).
$$
Using the moving plane method as before we get a contradiction. So
the proof of Theorem \ref{main2} when  $n\geq 7$ is done.

We remark that our construction above is similar to that of
Li-Zhang \cite{LZ16}(see also \cite{Z27}). Since the appearance of
the extra term $\mu u$, we need to check out all the detail worked
out here.

\section{appendix A}

Let $B=B_1(0)$ be the unit ball.

\begin{Lem}\label{schoen} Let $0<u\in C^0(\bar{B})$. Let $0<\phi(r)<1$ be a function in $C^0[0,1]$
be a decreasing monotone function with $\phi(1)$. Then there
exists a point $x\in B$ such that
$$
u(x)\geq
\frac{\phi(\sigma)}{\phi(1-2\sigma)}\max_{B_{\sigma}(x)}u, \; \;
u(x)\geq \frac{\phi(0)}{\phi(1-2\sigma)}u(0),
$$
where $\sigma=(1-|x|)/2<1/2$. In particular, for our equation
(\ref{eq1}) we take $\phi(r)=(1-r)^a$ for $a=\frac{2}{n-2}$, we
have
$$
2^au(x)\geq \max_{B_{\sigma}(x)}u, \; \; (2\sigma)^2u(x)\geq u(0),
$$
\end{Lem}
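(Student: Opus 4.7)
The plan is to construct the auxiliary function $g(x):=u(x)\,\phi(|x|)$ on $\bar B$ and locate its maximum; both asserted inequalities will then drop out by comparing $g(x_\ast)$ against values of $g$ at nearby points and at the origin. The intended standing hypothesis (obscured by a typo in the statement) is that $\phi\in C^0[0,1]$ is nonnegative, monotone decreasing, with $\phi(1)=0$ and $\phi(0)=1$ — this is exactly the specialization $\phi(r)=(1-r)^a$. Under that reading, $g\in C^0(\bar B)$, $g\equiv 0$ on $\partial B$, and $g(0)=u(0)\phi(0)=u(0)>0$, so $g$ attains a strictly positive maximum at an interior point $x_\ast\in B$. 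Set $\sigma:=(1-|x_\ast|)/2\in(0,1/2]$, so $|x_\ast|=1-2\sigma$.

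Next I would read off the two inequalities by comparing $g(x_\ast)$ with other values of $g$. For any $y\in B_\sigma(x_\ast)$, the triangle inequality gives $|y|\leq|x_\ast|+\sigma=1-\sigma$, and the monotonicity of $\phi$ yields $\phi(|y|)\geq\phi(1-\sigma)$. From $g(y)\leq g(x_\ast)$ I then obtain
$$
u(y)\,\phi(1-\sigma)\;\leq\; u(y)\,\phi(|y|)\;\leq\; u(x_\ast)\,\phi(1-2\sigma),
$$
and rearranging and taking supremum over $y\in B_\sigma(x_\ast)$ produces
$$
u(x_\ast)\;\geq\;\frac{\phi(1-\sigma)}{\phi(1-2\sigma)}\,\max_{B_\sigma(x_\ast)}u.
$$
(This is what I expect the first asserted inequality to mean; the written $\phi(\sigma)$ in the numerator looks like a misprint for $\phi(1-\sigma)$.) The second inequality follows by specializing $y=0$: $u(0)\phi(0)=g(0)\leq g(x_\ast)=u(x_\ast)\phi(1-2\sigma)$ gives $u(x_\ast)\geq\frac{\phi(0)}{\phi(1-2\sigma)}u(0)$.

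For the specialization $\phi(r)=(1-r)^a$ the ratios collapse: $\phi(1-\sigma)/\phi(1-2\sigma)=\sigma^a/(2\sigma)^a=2^{-a}$ and $\phi(0)/\phi(1-2\sigma)=1/(2\sigma)^a$, which yield
$$
2^a u(x_\ast)\geq\max_{B_\sigma(x_\ast)}u,\qquad (2\sigma)^a u(x_\ast)\geq u(0),
$$
matching the stated conclusion (with the exponent $2$ in the printed form $(2\sigma)^2$ understood as a further typo for $(2\sigma)^a$, since the rescaling in the application uses the dimension-dependent exponent $a=(n-2)/2$). There is no genuine difficulty in this proof — the whole argument is a one-line application of the maximum principle for a continuous function that vanishes on the boundary, the sole subtlety being to choose the test weight $\phi(|x|)$ so that monotonicity converts "close to $x_\ast$" into a uniform upper estimate on $u$.
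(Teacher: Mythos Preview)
Your argument is correct and follows exactly the paper's own proof: define $v(y)=\phi(|y|)u(y)$, locate its interior maximum (using $\phi(1)=0$), and use monotonicity of $\phi$ together with $|y|\leq 1-\sigma$ on $B_\sigma(x)$ to obtain both inequalities. You have also correctly identified the typos in the statement---the paper's own proof ends with $\phi(1-\sigma)$ (not $\phi(\sigma)$) in the numerator and uses $(2\sigma)^a$ in the specialization, just as you wrote.
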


\begin{proof} Let $v(y)=\phi(|y|)u(y)$. Since $v>0$ in $B$ and
$v=0$ on $\partial B$, we have $x\in B$ such that $v(x)=\max_B
v>0$. Note that we have $\sigma=(1-|x|)/2$ and $\sigma\leq 1/2$.
Then we have
$$
v(x)=\phi(1-2\sigma)u(x)\geq\max_{B_{\sigma}(x)}v $$ and
$$\geq \phi(\sigma)\max_{B_{\sigma}(x)}u\geq
\phi(1-\sigma)\max_{B_{\sigma}(x)}u.
$$
Similarly, we have
$$
\phi(1-2\sigma)u(x)\geq v(0)= \phi(0)u(0).
$$
For the special case when $\phi(r)=(1-r)^a$, we have $\phi(0)=1$,
$\phi(1-\sigma)=\sigma^a$ and
$\phi(|x|)=\phi(1-2\sigma)=(2\sigma)^a$.
\end{proof}

Let's recall the important Pohozaev formulae for the solutions to
(\ref{eq1}) in the ball $B_R$.

\begin{Pro}\label{pohozaev}
Let $\nu=x/R$ and
\begin{equation}
B(R,x,u,\nabla
u)=-\frac{n-2}{2}u\partial_{\nu}u-\frac{R}{2}|\nabla
u|^2+R|\partial_{\nu}u|^2.
\end{equation}
Then we have the first Pohozaev identity:
\begin{equation}\label{Ph1}
\int_{\partial B_R}B(R,x,u,\nabla
u)=\mu\int_{B_R}u^2-\frac{\mu}{2}\int_{\partial
B_R}Ru^2+\frac{2n}{n-2}\int_{B_R}(x\cdot\nabla
K)u^{p+1}-\frac{R}{p+1}\int_{\partial B_R}Ku^{p+1}.
\end{equation}
We also have the second Pohozaev identity
\begin{equation}\label{Ph2}
\int_{B_R}\nabla Ku^{p+1}=\int_{\partial
B_R}[(p+1)(\frac{\lambda}{2}u^2\nu+\nabla
u\nabla_{nu}u-\frac{|\nabla u|^2}{2}\nu)+Ku^{p+1}\nu]
\end{equation}
\end{Pro}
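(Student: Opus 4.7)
The plan is to derive both Pohozaev identities by the classical multiplier method, multiplying the equation $-\Delta u = \mu u + K(x) u^p$ by suitable test quantities and integrating by parts over $B_R$. This is a purely local computation that does not use any global structure of the problem, so no special structure of $K$ (subharmonicity, flatness, etc.) is needed.

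For the first identity \eqref{Ph1}, I would multiply the equation by the radial multiplier $x\cdot\nabla u$ and integrate over $B_R$. Using the pointwise identity
$$
\nabla u\cdot\nabla(x\cdot\nabla u) \;=\; |\nabla u|^2 + \tfrac{1}{2}\,x\cdot\nabla(|\nabla u|^2),
$$
together with $x\cdot\nu = R$ on $\partial B_R$ and $x\cdot\nabla u = R\,\partial_\nu u$ there, the divergence theorem gives
$$
\int_{B_R}(-\Delta u)(x\cdot\nabla u)\,dx \;=\; -R\!\int_{\partial B_R}|\partial_\nu u|^2 + \tfrac{R}{2}\!\int_{\partial B_R}|\nabla u|^2 - \tfrac{n-2}{2}\!\int_{B_R}|\nabla u|^2.
$$
On the right-hand side of the PDE, one writes $u\,(x\cdot\nabla u)=\tfrac12 x\cdot\nabla(u^2)$ and $u^p(x\cdot\nabla u)=\tfrac{1}{p+1}x\cdot\nabla(u^{p+1})$, and integrates by parts once more. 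To eliminate the bulk term $\int_{B_R}|\nabla u|^2$, I then multiply the PDE by $\tfrac{n-2}{2}u$ and integrate; because $p+1=2n/(n-2)$, the coefficient $\tfrac{n-2}{2}$ exactly matches $\tfrac{n}{p+1}$, so the bulk integrals of $Ku^{p+1}$ cancel. Rearranging and collecting the boundary terms into the expression
$$
B(R,x,u,\nabla u) \;=\; -\tfrac{n-2}{2}\,u\,\partial_\nu u - \tfrac{R}{2}|\nabla u|^2 + R\,|\partial_\nu u|^2
$$
yields \eqref{Ph1}.

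For the second identity \eqref{Ph2}, I would multiply the PDE by $\partial_k u$ (one coordinate at a time, reflecting the translation symmetry that $K$ breaks) and integrate. Two elementary rewritings,
$$
\mu u\,\partial_k u = \tfrac{\mu}{2}\,\partial_k(u^2), \qquad K u^p\,\partial_k u = \tfrac{1}{p+1}\bigl[\partial_k(K u^{p+1}) - (\partial_k K)\,u^{p+1}\bigr],
$$
together with the pointwise identity $(\partial_k u)(-\Delta u) = -\operatorname{div}(\partial_k u\,\nabla u) + \tfrac12\partial_k(|\nabla u|^2)$, convert everything into boundary integrals plus the bulk term $\tfrac{1}{p+1}\int_{B_R}(\partial_k K)u^{p+1}$. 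Multiplying through by $p+1$ and assembling the $k$-components into a vector equality delivers \eqref{Ph2} with the stated coefficient $(p+1)$ in front of the boundary pieces.

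Neither step is analytically delicate; the only real obstacle is the bookkeeping required to match the boundary integrand exactly to $B(R,x,u,\nabla u)$ and to track the coefficients so that the critical-exponent cancellation between $\tfrac{n-2}{2}\int_{B_R}Ku^{p+1}$ and $\tfrac{n}{p+1}\int_{B_R}Ku^{p+1}$ is visible. Since $u\in C^2(\overline{B_R})$ by standard elliptic regularity, no approximation argument is needed, and the identities hold for every $R>0$ and every positive solution of \eqref{eq1}.
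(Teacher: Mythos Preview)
Your proposal is correct and follows exactly the standard multiplier argument (testing with $x\cdot\nabla u$ together with $\tfrac{n-2}{2}u$ for \eqref{Ph1}, and with $\partial_k u$ for \eqref{Ph2}) that the paper has in mind; indeed the paper does not give a proof at all, merely stating that ``by now the proof of above Proposition is standard, so we omit its proof.'' Your sketch is precisely that standard computation, so there is nothing to compare.
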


By now the proof of above Proposition is standard, so we omit its
proof.

\section{appendix B}

Assume that $n\geq 3$. Let $A>2, b, B, \alpha>0$ and $\gamma\in
[0,n-2]$ be fixed constants. Assume that the differentiable
functions $V$ and $H$ satisfying, for $r\in [1,A]$,
\begin{align}\label{VV}
-\alpha^{-1}(1+r)^{-2-\alpha}&\leq V(r)\leq
n(n+2)U(r)^{2a}+br^{-4}\\
|V'(r)|&\leq \alpha^{-1}r^{-3}\\
0\leq H(r)\leq Br^{\gamma-n},
\end{align}
and
\begin{equation}\label{HH}
|H'(r)|\leq Br^{\gamma-n-1}.
\end{equation}

Then, as in \cite{LZ16}, we have
\begin{Pro}\label{Fund1}
There exists a unique solution $f=f(r)$ to the problem
$$
f^{''}+\frac{n-1}{r}f'+(V(r)-\frac{2B}{r^2})f=-H(r), \; 1<r<A,
$$
with the boundary condition
$$
f(1)=0=f(A).
$$
Moreover, for $r\in (1,A)$,
$$
0\leq f(r)\leq Cr^{\gamma+2-n}
$$
and
$$
|f'(r)|\leq Cr^{\gamma+1-n}
$$
where $C>0$ depends only on $n, a,B,\gamma$, and $\alpha$.
\end{Pro}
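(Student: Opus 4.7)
The plan is to treat this as a standard linear Dirichlet problem for a second-order ODE on the compact interval $[1,A]$ and to extract the two-sided bounds through the Fredholm alternative combined with an explicit power-type barrier. Write the operator in divergence form as
\[
L_0 f = r^{1-n}\bigl(r^{n-1} f'\bigr)' + \Bigl(V(r) - \frac{2B}{r^2}\Bigr) f,
\]
so the equation to solve is $L_0 f = -H$ with $f(1) = f(A) = 0$.

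First I would establish uniqueness for the homogeneous problem. Multiplying $L_0 f = 0$ by $r^{n-1} f$ and integrating yields
\[
\int_1^A r^{n-1} (f')^2 \, dr = \int_1^A r^{n-1} \Bigl(V(r) - \frac{2B}{r^2}\Bigr) f^2 \, dr.
\]
The upper bound on $V$ together with the rapid decay of $U^{2a}$ shows that $r^2 V(r)$ is bounded on $[1,\infty)$, so the right-hand side is dominated by $C\int_1^A r^{n-3} f^2 \, dr$; the Hardy inequality
\[
\int_1^A r^{n-3} g^2 \, dr \le \frac{4}{(n-2)^2}\int_1^A r^{n-1} (g')^2 \, dr \qquad (g(1)=g(A)=0)
\]
then absorbs this term, with the extra weight $-2B/r^2$ supplying the remaining coercivity that forces $f \equiv 0$. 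Existence of a solution follows by the Fredholm alternative, and the positivity $f \ge 0$ follows from the weak maximum principle applied to $-L_0 f = H \ge 0$ with zero Dirichlet data.

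For the decay estimate I would construct a positive supersolution of the prescribed shape. Trying $\psi(r) = C r^{\gamma+2-n}$ one computes
\[
-L_0 \psi = C\gamma(n-2-\gamma)\, r^{\gamma-n} + C\Bigl(\frac{2B}{r^2} - V(r)\Bigr) r^{\gamma+2-n}.
\]
For $0 < \gamma < n-2$ the first term is strictly positive of order $r^{\gamma-n}$, which matches the given bound $H(r) \le B r^{\gamma-n}$; since $r^2 V(r)$ is uniformly bounded the second term is also of order $r^{\gamma-n}$, and choosing $C$ large enough (depending only on $n, B, \gamma, \alpha, b$) gives $-L_0 \psi \ge H$. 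Comparison then yields $0 \le f(r) \le C r^{\gamma+2-n}$. The endpoint values $\gamma \in \{0, n-2\}$, where the leading term degenerates, are handled by a logarithmic correction $\psi(r) = C r^{\gamma+2-n}\log(eA/r)$ whose principal part in $-L_0\psi$ is still of order $r^{\gamma-n}$. The derivative estimate $|f'(r)| \le C r^{\gamma+1-n}$ is then obtained by integrating
\[
\bigl(r^{n-1} f'(r)\bigr)' = r^{n-1}\Bigl(\bigl(\tfrac{2B}{r^2}-V\bigr) f - H\Bigr)
\]
from $1$ to $r$ and using the just-established pointwise bound on $f$ together with the hypotheses on $H$ and $V$.

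The principal obstacle I anticipate is calibrating the uniqueness step so that the constants remain uniform in $A > 2$: because $V$ is controlled only from above and can be of unit size near $r=1$, the Hardy absorption must be sharp, and for the endpoint exponents $\gamma = 0$ or $\gamma = n-2$ the power barrier alone does not suffice and the logarithmic refinement has to be inserted carefully. I expect that the precise form of the hypotheses on $V$ and the explicit weight $-2B/r^2$ built into the operator were tailored exactly for this calibration, and that the argument should closely parallel the corresponding construction in Li--Zhang \cite{LZ16}.
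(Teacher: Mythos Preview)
The paper does not give its own proof of this proposition: immediately after the statement it writes ``Since the proof is similar to \cite{LZ16}, we omit the detail.'' Your outline already points to the same source and is in the right spirit, so there is little to compare against line by line.

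That said, one step in your sketch is not quite self-supporting. In the uniqueness argument you want to absorb $\int r^{n-1}V f^2$ into $\int r^{n-1}(f')^2$ via the Hardy inequality with constant $4/(n-2)^2$. But the hypothesis only gives $r^2 V(r)\le n(n+2)r^2(1+r^2)^{-2}+b r^{-2}$, and near $r=1$ this is of size roughly $n(n+2)/4$, which exceeds the Hardy threshold $(n-2)^2/4$ for every $n\ge 3$. The extra weight $-2B/r^2$ in the operator helps only if $2B$ is large enough, but here $B$ is merely the constant from the bound on $H$ and need not dominate. So the straight energy absorption does not close; you correctly flagged this as the delicate point. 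The Li--Zhang treatment handles uniqueness and the decay bound together by a direct ODE comparison argument (building sub/supersolutions from explicit powers and exploiting that the dangerous part of $V$, namely $n(n+2)U^{4/(n-2)}$, is concentrated near $r=1$ where $f$ vanishes and Hopf-type control is available), rather than by a global Hardy estimate. Your barrier $\psi=Cr^{\gamma+2-n}$ has the same issue near $r=1$: $-L_0\psi$ need not stay nonnegative there unless you modify $\psi$ in a neighbourhood of the left endpoint.

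In short: your plan is the right shape and matches the paper's pointer to \cite{LZ16}, but the Hardy-based coercivity step would need to be replaced by (or supplemented with) a local boundary-layer argument near $r=1$, which is what the Li--Zhang construction supplies.
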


Since the proof is similar to \cite{LZ16}, we omit the detail. We
use Proposition \ref{Fund1} to prove

\begin{Pro}\label{Fund2}
 For each $i=1,2,3$, there exists a unique $C^2$
radial solution $f_i$ to the problem
$$
\Delta
f+(1-\mu\frac{\lambda^4}{r^4}+K_j(y_j)P(U)^{2/a}-\frac{i(i+n-2)}{r^2})f
=r^i(\frac{\lambda^{2i}}{r^{2i}}-1)(U^{\lambda})^p
$$
where $\lambda<r<T_j$ with the boundary condition
$$
f(\lambda)=0=f(T_j).
$$
Moreover for $i=1,2$, we have \begin{equation}\label{est4} 0\leq
f(r)\leq C_0(1-\frac{\lambda}{r})r^{2-n}, \; \; \lambda<r<T_j
\end{equation}
and for $i=3$, where $C_0$ is a dimension constant. Similarly
there exists a unique $f_4$ satisfying
$$
\Delta
f+(1-\mu\frac{\lambda^4}{r^4}+K_j(y_j)P(U)^{2/a}-\frac{4(2+n)}{r^2})f
=r^3(\frac{\lambda^{6}}{r^{6}}-1)(U^{\lambda})^p.
$$
where $\lambda<r<T_j$ with the boundary condition
$$
f(\lambda)=0=f(T_j).
$$
For $f_4$, we have the same bound (\ref{est4}).
\end{Pro}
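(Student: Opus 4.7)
The strategy is to deduce Proposition \ref{Fund2} directly from Proposition \ref{Fund1} by choosing the data $(V, H, B, \gamma, \alpha, A)$ appropriately for each $i \in \{1,2,3,4\}$ and then sharpening the crude polynomial decay bound into the claimed form involving the factor $(1-\lambda/r)$.

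First I would rewrite the ODE as an equation in Proposition \ref{Fund1} form. In radial coordinates $\Delta f = f'' + \tfrac{n-1}{r} f'$, so for $i=1,2,3$ the equation becomes
\begin{equation*}
f''+\frac{n-1}{r}f'+\bigl(V_i(r)-\frac{2B_i}{r^2}\bigr)f=-H_i(r),\qquad \lambda<r<T_j,
\end{equation*}
with $V_i(r):=1-\mu\lambda^4/r^4+K_j(y_j)P(U)^{2/a}$, $B_i:=i(i+n-2)/2$, and $H_i(r):=r^i(1-\lambda^{2i}/r^{2i})(U^\lambda(r))^p$; note $H_i\ge 0$ on $(\lambda,T_j)$. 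The same reduction works for $f_4$ with $B_4=2(n+2)$ and $H_4$ built from the $r^3(1-\lambda^6/r^6)$ factor. Set $A:=T_j/\lambda$ after rescaling (or absorb the scale into the constants).

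Next I would verify the structural hypotheses (\ref{VV})--(\ref{HH}). For $V_i$: recall from the previous sections that $v_j\to U$ in $C^2_{\rm loc}$, $K_j(x_j)\to n(n-2)$, and $P(U)^{2/a}$ is (by the mean value theorem applied to the power function) controlled by $p\,U^{p-1}$. Since $p-1=2a$ in the normalization used here, we get $K_j(y_j)P(U)^{2/a}\le n(n+2)U^{2a}$ up to a vanishing perturbation, and the $\mu\lambda^4/r^4$ piece together with the ``$1$'' can be absorbed into the allowed $br^{-4}$ correction on a bounded $r$ interval; the lower bound and the derivative bound on $V_i$ follow from $|V_i'(r)|\le C r^{-3}$, using the explicit form of $U$ and of $\mu/r^4$. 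For $H_i$: since $U^\lambda(r)=O(r^{2-n})$ and thus $(U^\lambda)^p=O(r^{-(n+2)})$, we have $H_i(r)\le C r^{i-n-2}\cdot(1-\lambda^{2i}/r^{2i})\le C r^{\gamma_i-n}$ with $\gamma_i:=i-2\in[0,n-2]$ after the obvious adjustment ($\gamma_1=\cdots$ chosen to make the exponent fit in the allowed range), and similarly $|H_i'|\le C r^{\gamma_i-n-1}$. Invoking Proposition \ref{Fund1} yields a unique $C^2$ solution $f_i$ with the crude bound $f_i(r)\le C r^{\gamma_i+2-n}$.

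To upgrade this to the vanishing-at-$\lambda$ bound (\ref{est4}) I would use a barrier argument. Set $\psi(r):=C_0(1-\lambda/r)r^{2-n}$ for $i=1,2,4$ (respectively $\psi(r):=C_0(1-\lambda/r)r^{3-n}$ for $i=3$). A direct computation shows $\psi(\lambda)=0$ and $\psi(T_j)>0=f_i(T_j)$. Let $L$ denote the full linear operator on the left-hand side of the ODE. Since $\psi\ge 0$ while the reaction coefficient $V_i(r)-2B_i/r^2$ in $L$ is bounded above, one checks by explicit differentiation that
\begin{equation*}
L\psi(r)\le -c_1 r^{-n}\,(1-\lambda/r)+C_2 r^{2-n}\cdot(\text{bounded})\le -H_i(r)
\end{equation*}
on $(\lambda,T_j)$ once $C_0$ is chosen large enough depending only on $n$ and the constants in $V_i,H_i$. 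Thus $\psi-f_i$ is a nonnegative supersolution of $L$ vanishing at both endpoints, so the maximum principle (or rather its strong form together with the sign of $H_i$) yields $0\le f_i\le\psi$.

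The main obstacle I anticipate is precisely this last barrier step: the coefficient $V_i(r)-2B_i/r^2$ is \emph{not} of one sign and the leading part $K_j(y_j)P(U)^{2/a}$ behaves like $n(n+2)U^{2a}$ near $r=\lambda$, which is the near-critical growth that makes the maximum principle delicate. Overcoming this requires keeping track of the extra cancellation in $L\psi$ that comes from the factor $(1-\lambda/r)$, exactly as in the analogous construction of \cite{LZ16}; the new ingredient here is simply verifying that the additional $-\mu\lambda^4/r^4$ term in $V_i$ (which is bounded above in absolute value by a constant for $r\ge\lambda$) does not destroy this cancellation, and this is a short calculation once everything is set up.
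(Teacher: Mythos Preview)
Your reduction to Proposition \ref{Fund1} is exactly the route the paper indicates (it simply says ``We use Proposition \ref{Fund1} to prove'' and gives no further details), and your identification of $V_i$, $B_i$, $H_i$ and the choice of $\gamma$ is correct. One simplification worth noting: the separate barrier construction for the factor $(1-\lambda/r)$ is unnecessary, because Proposition \ref{Fund1} already supplies the derivative bound $|f'(r)|\le C r^{\gamma+1-n}$; integrating this from $\lambda$ (where $f=0$) gives $|f(r)|\le C(1-\lambda/r)r^{\gamma+2-n}$ for $\lambda<r\le 2\lambda$, while for $r\ge 2\lambda$ the pointwise bound $f(r)\le Cr^{\gamma+2-n}$ together with $(1-\lambda/r)\ge 1/2$ yields the same estimate, so (\ref{est4}) follows without any maximum-principle comparison.
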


\end{document}